\newcolumntype{Y}{>{\centering\arraybackslash}X}
\newcommand{\nwidth}{4cm}
\tikzstyle{no} = [rectangle, rounded corners, 
\tikzstyle{arrowv} = [implies-,double equal sign distance,shorten >=0.15cm,shorten <=0.15cm]
\tikzstyle{arrow} = [implies-,double equal sign distance,shorten >=0.05cm,shorten <=0.05cm]
\theoremstyle{plain}
\newtheorem{theorem}{Theorem}[section]
\newtheorem{lemma}[theorem]{Lemma}
\newtheorem{proposition}[theorem]{Proposition}
\newtheorem{corollary}[theorem]{Corollary}
\newtheorem*{theorem*}{Theorem}
\newtheorem{theoremA}{Theorem}
\newtheorem*{claim}{Claim}
\theoremstyle{definition}
\newtheorem{definition}[theorem]{Definition}
\newtheorem{remark}[theorem]{Remark}
\newtheorem*{remark*}{Remark}
\newtheorem{example}[theorem]{Example}
\newtheorem{notation}[theorem]{Notation}
\newtheorem*{notation*}{Notation}
\newtheorem{construction}[theorem]{Construction}
\newtheorem{observation}[theorem]{Observation}
\newtheorem{warning}[theorem]{Warning}
\DeclareMathOperator{\id}{id}
\DeclareMathOperator*{\colim}{colim}
\DeclareMathOperator{\Map}{Map}
\DeclareMathOperator{\const}{const}
\DeclareMathOperator{\Twr}{Tw^r}
\DeclareMathOperator{\Ar}{Ar}
\DeclareMathOperator{\Lan}{Lan}
\DeclareMathOperator{\VLan}{VLan}
\DeclareMathOperator{\Sq}{Sq}
\DeclareMathOperator{\Ver}{Vert}
\newcommand{\hide}[1]{}
\newlist{numberenum}{enumerate}{1}
\setlist[numberenum]{\upshape(\arabic*)}
\newcommand{\Fun}{\mathrm{Fun}}
\newcommand{\Lax}{\mathrm{Lax}}
\newcommand{\UnitLax}{\mathrm{Lax}_\mathrm{un}}
\newcommand{\UnitOplax}{\mathrm{Oplax}_\mathrm{un}}
\newcommand{\inert}{\mathrm{inert}}
\newcommand{\idle}{\mathrm{idle}}
\newcommand{\Spc}{\mathcal{S}}
\newcommand{\Cat}{\mathcal{C}\mathrm{at}}
\newcommand{\DblCat}{\mathcal{D}\mathrm{bl}\mathcal{C}\mathrm{at}}
\newcommand{\DblCatlax}{\mathcal{D}\mathrm{bl}\mathcal{C}\mathrm{at}^\mathrm{lax}}
\newcommand{\DblCatulax}{\mathcal{D}\mathrm{bl}\mathcal{C}\mathrm{at}^\mathrm{ulax}}
\newcommand{\coCart}{\mathrm{co}\mathcal{C}\mathrm{art}}
\newcommand{\LoccoCart}{\mathcal{L}\mathrm{oc}\mathrm{co}\mathcal{C}\mathrm{art}}
\newcommand{\Cart}{\mathcal{C}\mathrm{art}}
\DeclareMathOperator{\Env}{Env}
\DeclareMathOperator{\UnitEnv}{Env^\mathrm{un}}
\newcommand{\LaxMor}{\overline{\mathbb{M}\mathrm{or}}}
\newcommand{\Mor}{\mathbb{M}\mathrm{or}}
\newcommand{\op}{\mathrm{op}}
\newcommand{\Seg}{\mathcal{S}\mathrm{eg}}
\newcommand{\CSeg}{\mathcal{CS}\mathrm{eg}}
\newcommand{\un}{\smallint}
\newcommand{\activ}{\mathrm{act}}
\newcommand{\alln}{\Delta^\op_{/[n]}}
\newcommand{\idln}{\Lambda^\op_{/[n]}}
\newcommand{\all}[1]{\Delta^\op_{/[#1]}}
\newcommand{\idl}[1]{\Lambda^\op_{/[#1]}}
\newcommand{\Corr}{\mathcal{C}\mathrm{orr}}
\newcommand{\twoCorr}{\mathrm{Corr}}
\newcommand{\CondFib}{\mathcal{C}\mathrm{ond}}
\newcommand{\ver}{\mathrm{v}}
\newcommand{\hor}{\mathrm{h}}
\newcommand{\hop}{\mathrm{hop}}
\newcommand{\eL}{\mathcal{L}}
\newcommand{\eR}{\mathcal{R}}
\newcommand{\DD}{\mathbb{D}}
\newcommand{\DE}{\mathbb{E}}
\newcommand{\DSpan}{\mathbb{S}\mathrm{pan}}
\newcommand{\DProfSeg}{\mathbb{C}\mathrm{orr}_{\mathrm{Seg}}}
\newcommand{\DProf}{\mathbb{C}\mathrm{orr}}
\newcommand{\DCorr}{\mathbb{C}\mathrm{orr}}
\newcommand{\DSeg}{\mathbb{S}\mathrm{eg}}
\newcommand{\DCatComp}{\mathbb{C}\mathrm{at}}
\newcommand{\DCond}{\mathbb{C}\mathrm{ond}}
\newcommand{\unit}{\mathds{1}}
\newcommand{\cB}{\mathcal{B}}
\newcommand{\cC}{\mathcal{C}}
\newcommand{\cD}{\mathcal{D}}
\newcommand{\cE}{\mathcal{E}}
\newcommand{\cL}{\mathcal{L}}
\newcommand{\cP}{\mathcal{P}}
\newcommand{\cQ}{\mathcal{Q}}
\newcommand{\cR}{\mathcal{R}}
\newcommand{\cX}{\mathcal{X}}
\newcommand{\oset}[3][0ex]{%
	\mathrel{\mathop{#3}\limits^{
			\vbox to#1{\kern-2\ex@
				\hbox{$\scriptstyle#2$}\vss}}}}
\newcommand{\eqarrow}{\oset[-.16ex]{\sim}{\longrightarrow}}
\tikzset{
	rot90/.style={anchor=south, rotate=90, inner sep=.5mm}
}
\newcommand{\barotimes}{\mathbin{\bar\otimes}}
\def\slashedarrowfill@#1#2#3#4#5{%
	$\m@th\thickmuskip0mu\medmuskip\thickmuskip\thinmuskip\thickmuskip
	\relax#5#1\mkern-7mu%
	\cleaders\hbox{$#5\mkern-2mu#2\mkern-2mu$}\hfill
	\mathclap{#3}\mathclap{#2}%
	\cleaders\hbox{$#5\mkern-2mu#2\mkern-2mu$}\hfill
	\mkern-7mu#4$%
}
\def\rightslashedarrowfill@{%
	\slashedarrowfill@\relbar\relbar\mapstochar\rightarrow}
\newcommand\xprofun[2][]{%
	\ext@arrow 0055{\rightslashedarrowfill@}{#1}{#2}}
\NewDocumentCommand\derprojlim{e{_}}{\mathchoice
	{\varprojlim  \IfValueT{#1}{_{\mathclap{#1}}}{}^{\!1\!}\mathop{}}
	{\varprojlim^1  \IfValueT{#1}{_{#1}}}
	{\varprojlim^1  \IfValueT{#1}{_{#1}}}
	{\varprojlim^1  \IfValueT{#1}{_{#1}}}}
\newcommand{\lax}{\mathrm{lax}}
\newcommand{\oplax}{\mathrm{oplax}}
\newcommand{\thmqed}[1]{\pushQED{\qed} #1 \qedhere\popQED}
\renewcommand{\subset}{\subseteq}
\begin{document}
	\title{On the straightening of every functor}
	\author{Thomas Blom}
    \address{Max Planck Institute for Mathematics, Vivatsgasse 7, 53111 Bonn, Germany}
	\email{blom@mpim-bonn.mpg.de}
	
	\begin{abstract}
		We show that any functor between $\infty$-categories can be straightened. More precisely, we show that for any $\infty$-category $\cC$, there is an equivalence between the $\infty$-category $(\Cat_{\infty})_{/\cC}$ of $\infty$-categories over $\cC$ and the $\infty$-category of unital lax functors from $\cC$ to the double $\infty$-category $\DProf$ of correspondences. The proof relies on a certain universal property of the Morita category which is of independent interest.
	\end{abstract}

	\maketitle 
	\tableofcontents

	\section{Introduction}
	
	One of the cornerstones of higher category theory is Lurie's celebrated straightening-unstraightening theorem for (co)cartesian fibrations \cite[\S 3.2]{HTT}. In the case of cartesian fibrations, this result states that for an $\infty$-category $\cC$, the $\infty$-category $\Cart(\cC)$ of cartesian fibrations over $\cC$ is equivalent to the $\infty$-category of functors from $\cC^\op$ to the $\infty$-category $\Cat_\infty$ of $\infty$-categories. Similar results have been obtained for other types of fibrations. Notably, in \cite[\S 3]{Lurie2009InftyCategoriesGoodwillie} it is shown that the $\infty$-category of \emph{locally} cartesian fibrations over $\cC$ is equivalent to the $\infty$-category of \emph{(unital) lax} functors from $\cC^\op$ to $\Cat_\infty$, and Ayala--Francis showed in \cite[\S 2.3]{AyalaFrancis2020FibrationsInftyCategories} that there is an equivalence between the space of Conduché fibrations\footnote{What we call Conduché fibrations are called exponentiable fibrations in \cite{AyalaFrancis2020FibrationsInftyCategories}} over $\cC$ and the space of functors from $\cC$ to a certain \emph{flagged} $\infty$-category of correspondences.
	
	This may lead one to wonder whether an analogous statement can be proved for \emph{locally} Conduché fibrations---namely, whether these can be straightened to lax functors into an appropriate $\infty$-category of correspondences. Recall that a functor $\cD \to \cC$ is called a \emph{locally cartesian fibration} if for any functor $[1] \to \cC$, the pullback $[1] \times_{\cC} \cD \to [1]$ is a cartesian fibration. By replacing cartesian fibrations with Conduché fibrations in this definition, one arrives at the definition of a \emph{locally Conduché fibration}. However, this notion turns out to be vacuous: since functors to $[1]$ are always Conduché fibrations, it follows that any functor between $\infty$-categories is a locally Conduché fibration. 
	
	Nevertheless, the expected straightening equivalence \emph{does} hold. More precisely, we show the following.
	
	\begin{theoremA}\label{theoremA}
		There exists a double $\infty$-category $\DCorr$ such that for any $\infty$-category $\cC$, there is an equivalence between the $\infty$-category $(\Cat_\infty)_{/\cC}$ of $\infty$-categories over $\cC$ and the $\infty$-category $\UnitLax(\cC,\DCorr)$ of unital lax functors from $\cC$ to $\DCorr$. Moreover, this equivalence is natural in $\cC$.
	\end{theoremA}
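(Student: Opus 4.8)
The plan is to deduce Theorem~\ref{theoremA} from a universal property of the Morita double $\infty$-category, applied to spans of spaces. The first step is to identify the target $\DProf$ as the Morita double $\infty$-category $\Mor(\Span(\Spc))$ of the double $\infty$-category $\Span(\Spc)$ of spans of spaces. The point is that an $\infty$-category is equivalently an associative monad on its space of objects in $\Span(\Spc)$ (a Segal space is exactly such a monad), that a correspondence $\cA \profun \cB$---equivalently a functor $\cM \to [1]$ with $\cM|_{\{0\}} \simeq \cA$ and $\cM|_{\{1\}} \simeq \cB$, or a two-sided fibration over $\cA \times \cB$---is exactly a bimodule, and that composing correspondences by the coend is the relative tensor product. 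Granting this, the objects, vertical morphisms, horizontal morphisms, and squares of $\DProf$ are $\infty$-categories, functors, correspondences, and maps of correspondences.

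Next I would set up the straightening functor by hand, over the category of simplices. Writing $\Delta_{/\cC}$ for the category of simplices of $\cC$, I send $p \colon \cD \to \cC$ to the assignment $(\sigma \colon [n] \to \cC) \mapsto (\sigma^\ast \cD := [n] \times_\cC \cD \to [n])$, an object of $(\Cat_\infty)_{/[n]}$. Restricting $\sigma$ to the edges of $[n]$ records the fibrewise correspondences $\cD_c \profun \cD_{c'}$ attached to the morphisms of $\cC$, while the long hom-spaces of $\sigma^\ast\cD$ furnish the lax structure: the canonical comparison from the coend of the edge-correspondences into $\sigma^\ast\cD$ is precisely the (non-invertible) lax Segal map, and the degeneracies make the unit maps invertible, giving unitality. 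The first half of the equivalence is then a descent statement presenting categories over $\cC$ in terms of categories over its simplices, of the form $(\Cat_\infty)_{/\cC} \simeq \lim_{\Delta^\op_{/\cC}} (\Cat_\infty)_{/[n]}$, arising from the presentation of $\cC$ as the colimit of its simplices; establishing this equivalence is itself part of the work.

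It remains to identify this limit of slice $\infty$-categories with $\UnitLax(\cC, \DProf)$. Here I would use the model of unital lax functors as sections $\Delta^\op_{/\cC} \to \int \DProf$ of the Grothendieck construction of $\DProf$: unitality is the requirement that the sections be cocartesian over the degeneracy maps (invertible unit comparisons), while the spine inclusions $\Lambda^\op_{/[n]} \hookrightarrow \Delta^\op_{/[n]}$ are deliberately not required to be inverted---their failure is exactly the laxness. Since the total objects of $\DProf_n$ are categories over $[n]$, such sections are the same compatible families over $\Delta_{/\cC}$ appearing above, and the Morita universal property packages this matching, together with all higher coherences, into a single equivalence.

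The hard part will be the second equivalence: constructing the comparison between the limit of slices and unital lax functors carrying all coherence data, and proving it is an equivalence rather than relating the two laxly by hand. This is precisely what the universal property of $\Mor(-)$ is meant to resolve, reducing the statement to formal properties of the Morita construction and the descent equivalence. Finally, naturality in $\cC$ should be automatic: a functor $\cC \to \cC'$ induces $\Delta_{/\cC} \to \Delta_{/\cC'}$ together with base change $(\Cat_\infty)_{/\cC'} \to (\Cat_\infty)_{/\cC}$, and each step above is manifestly compatible with these, so the equivalences assemble into a natural transformation of functors of $\cC$.
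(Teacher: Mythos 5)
There is a genuine gap, and it sits exactly where you place the weight of the argument. Your plan rests on the descent claim $(\Cat)_{/\cC}\simeq\lim_{\Delta^\op_{/\cC}}(\Cat)_{/[n]}$, which you say "arises from the presentation of $\cC$ as the colimit of its simplices." It does not: colimits in $\Cat$ are not universal, so the decomposition $\cC\simeq\colim_{\Delta_{/\cC}}[n]$ gives no \emph{a priori} control over $\Cat_{/\cC}$. (This is the same failure that makes a functor $\cD\to[2]$ irrecoverable from its restrictions to $[0,1]$ and $[1,2]$ alone.) The descent statement is in fact true, but in the paper it is a \emph{consequence} of \cref{theoremA}: combining \cref{corollary:2nd-straightening-result-complete} with \cref{proposition:Stronger-universal-property-DCorr} gives $\Cat_{/\cC}\simeq\UnitLax(\cC^\hor,\DProf)\simeq\Fun(\cC^\hor,\Cat_{/[\bullet]})$, and the right-hand side is precisely your strict limit. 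So taking it as an input is essentially assuming the theorem. Your second step has the same problem: identifying $\lim_{\Delta^\op_{/\cC}}\Cat_{/[n]}$ with $\UnitLax(\cC^\hor,\DProf)$ requires knowing both that $\Cat_{/[n]}\simeq\UnitLax([n]^\hor,\DProf)$ (the theorem for $\cC=[n]$) and that strictly cocartesian sections of $\un\Cat_{/[\bullet]}$ correspond to idle-cocartesian sections of $\un\DProf$ --- the "partially cofree" statement of \cref{proposition:Stronger-universal-property-DCorr} --- neither of which is supplied by the Morita universal property as you invoke it. Indeed, you never actually use the left-hand side $\Lax(-,\DSpan(\Spc))$ of \cref{theoremB:Morita}, which is the only thing that universal property gives you traction on.

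The missing ingredient is the universal property of the span double category (\cref{theorem:universal-property-span-double-cat}): $\Lax(\cC^\hor,\DSpan(\Spc))\simeq\Seg_{\cC^\hor}(\Spc)$. This is what converts "things over $\cC$" into "lax functors into something" in the first place, and the descent-type step it requires --- that left Kan extension along the left fibration $\Delta^\op_{/\cC}\to\Delta^\op$ identifies $\Fun(\Delta^\op_{/\cC},\Spc)$ with $\Fun(\Delta^\op,\Spc)_{/\cC}$, restricting to Segal objects --- happens at the level of space-valued presheaves, where it \emph{is} a formal fact (\cref{theorem:first-straightening}). Only then does \cref{theoremB:Morita} convert $\Lax(\cC^\hor,\DSpan(\Spc))$ into $\UnitLax(\cC^\hor,\Mor(\DSpan(\Spc)))$, and one restricts to complete objects. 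Your heuristic picture of $\DProf$ as $\Mor(\DSpan(\Spc))$ (categories as monads in spans, correspondences as bimodules, composition as a coend) is the right one and matches \cref{definition:double-cat-of-profunctors}, but as written the proof replaces the two provable pivots of the argument with two statements that are each roughly equivalent to the theorem itself.
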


	Let us elaborate on what this means. Double $\infty$-categories are defined as Segal objects in $\Cat_\infty$ and can be thought of as categories internal to $\Cat_\infty$. In a double $\infty$-category, one can speak of morphisms between morphisms. This allows one to define functors into a double $\infty$-category that only preserve composition up to a (possibly) non-invertible morphism: instead of asking for an equivalence $F(f) \circ F(g) \simeq F(f \circ g)$, one only asks that there is a morphism from $F(f) \circ F(g)$ to $F(fg)$. Such a functor will be called unital lax. For details on double $\infty$-categories, the reader is referred to \cref{section:double-categories}. In particular, a precise (and coherent) definition of unital lax functors is given in \cref{definition:lax-functors} below. For the naturality statement in \cref{theoremA}, observe that $(\Cat_\infty)_{/\cC}$ is functorial in $\cC$ via pullback and $\UnitLax(\cC, \DCorr)$ via precomposition.
	
	\begin{remark}
		In ordinary category theory, an analog of \cref{theoremA} was proved by Bénabou and written up by Street \cite{Street2001Powerful}. It is worth noting that Bénabou's result, as stated in \cite{Street2001Powerful}, is slightly different, since it uses a bicategory of correspondences. This formulation of the result violates the principle of equivalence, which can be worked around by using a (pseudo) double category of correspondences instead. This is one of the reasons why \cref{theoremA} features a double $\infty$-category of correspondences and not an $(\infty,2)$-category.
	\end{remark}   
	
	A crucial role in our proof of \cref{theoremA} is played by the following universal property of the Morita (double) $\infty$-category, which we believe to be of independent interest.
	
	\begin{theoremA}\label{theoremB:Morita}
		Let $\DD$ be a double $\infty$-category and suppose $\DD$ is \emph{Moritable} (see \cref{def:Moritable}). Then there exists a double $\infty$-category $\Mor(\DD)$, its \emph{Morita double $\infty$-category}, satisfying the universal property that for any double $\infty$-category $\DE$, there is a natural equivalence
		\[\Lax(\DE,\DD) \simeq \UnitLax(\DE,\Mor(\DD))\]
		of categories.
	\end{theoremA}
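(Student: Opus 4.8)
The plan is to construct $\Mor(\DD)$ explicitly as an $\infty$-category of algebras and bimodules internal to $\DD$, and then to exhibit the asserted equivalence as induced by a single canonical lax functor $\pi\colon\Mor(\DD)\to\DD$; conceptually, the statement says exactly that $\Mor$ is right adjoint to the wide inclusion of unital lax functors into lax functors, with $\pi$ the counit. For the construction I would take the objects of $\Mor(\DD)$ to be the algebras in $\DD$, i.e. monoids in the horizontal endomorphism $\infty$-category at a fixed object, its horizontal morphisms to be bimodules, and its horizontal composition to be the relative tensor product, computed by the two-sided bar construction. The role of the Moritable hypothesis (\cref{def:Moritable}) is precisely to guarantee that these relative tensor products exist, that they are preserved by the relevant structure maps, and hence that the resulting simplicial object satisfies the Segal condition; this is what makes $\Mor(\DD)$ a genuine double $\infty$-category with coherently associative and unital horizontal composition.

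The comparison then comes from the lax functor $\pi\colon\Mor(\DD)\to\DD$ sending an algebra or a bimodule to its underlying object of $\DD$. This is only \emph{lax}, not unital: its unit constraint $\id_{\pi(A)}\to\pi(1_A)$ is the unit map of the algebra $A$, which is not invertible, while its composition constraint $\pi(M)\circ\pi(N)\to\pi(M\otimes_B N)$ is the canonical map from the underlying horizontal composite into the bar realization (the inclusion of $0$-simplices). Post-composition with $\pi$, together with the fact that a unital lax functor followed by a lax functor is again lax, yields the comparison functor
\[\pi_\ast\colon\UnitLax(\DE,\Mor(\DD))\longrightarrow\Lax(\DE,\DD),\]
manifestly natural in $\DE$. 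The whole content of the theorem is that $\pi_\ast$ is an equivalence.

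To produce an inverse I would straighten the lax structure of a lax functor $F\colon\DE\to\DD$ into module data. The unit and composition constraints of $F$ make $F(1_x)$ into an algebra for every object $x$, and make $F(f)$ into a bimodule over $F(1_y)$ and $F(1_x)$ for every horizontal morphism $f\colon x\to y$; this assembles into a functor $\widehat{F}\colon\DE\to\Mor(\DD)$ that is unital because the horizontal identity $1_x$ is sent to $F(1_x)$ regarded as the unit bimodule over itself. Its composition constraint is the factorization $F(g)\otimes_{F(1_y)}F(f)\to F(g\circ f)$ of the lax constraint of $F$ through the relative tensor product. That $\pi_\ast\widehat{F}\simeq F$ is then a formal consequence of the universal property of the bar construction, since precomposing this factorization with the $0$-simplex inclusion recovers the original constraint of $F$; conversely, for a unital lax $G$, unitality is exactly what forces the algebra and bimodule data recovered from $\pi_\ast G$ to agree with $G$, giving $\widehat{\pi_\ast G}\simeq G$.

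The hard part will be coherence. The constructions above are transparent on objects and on $1$-morphisms, but upgrading them to maps of the full coherent data defining (unital) lax functors in the sense of \cref{definition:lax-functors}---and checking that both round-trips are equivalences together with all their higher homotopies---requires realizing the passage from lax constraints to bimodule structures as a single functor compatible with every Segal map, rather than cell by cell. I expect this to be handled by presenting both $\UnitLax(\DE,\Mor(\DD))$ and $\Lax(\DE,\DD)$ as spaces of sections of one ambient fibration built from the bar construction, so that $\pi_\ast$ is seen to be an equivalence levelwise and hence globally; the Moritable hypothesis is what keeps this fibration and its sections under control, and naturality in $\DE$ is automatic from the description by precomposition.
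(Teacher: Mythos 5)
Your conceptual framing is the right one: the theorem does say that $\Mor$ is right adjoint to the inclusion $\DblCatulax \hookrightarrow \DblCatlax$, the counit is the non-unital lax functor $\pi\colon \Mor(\DD)\to\DD$ remembering underlying objects, and the identification of the lax constraints of $F$ with algebra and bimodule structures on $F(1_x)$ and $F(f)$ is exactly the mechanism at work. But as a proof the proposal has a genuine gap, and you have located it yourself: everything you write concerns objects, single horizontal arrows and binary composites, while the passage to coherent data is deferred to an unspecified ``ambient fibration built from the bar construction.'' That deferral swallows both halves of the theorem. First, that algebras, bimodules and iterated relative tensor products assemble into a simplicial object in $\Cat$ satisfying the Segal condition is itself a substantial theorem (it is the content of the construction of higher Morita categories), not an input you may assume. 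Second, the inverse $F\mapsto\widehat F$ must be produced as a functor $\Lax(\DE,\DD)\to\UnitLax(\DE,\Mor(\DD))$, natural in $\DE$, together with coherent round-trip equivalences; checking that the binary constraint of $F$ is recovered from the $0$-simplices of the bar construction says nothing about the tower of higher constraints indexed by all simplices of $\DE$, nor about full faithfulness of $\pi_*$ on morphisms of lax functors.

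The way the paper (\cref{appendix:Universal-property-Morita}) escapes this is worth internalizing: it does \emph{not} build $\Mor(\DD)$ cell by cell. Instead it sets $\Mor(\DD)_n$ to be the full subcategory of $\Lax([n]^\hor,\DD)\simeq\Fun^\inert_{/\Delta^\op}(\alln,\un\DD)$ spanned by the \emph{composite} objects, i.e.\ the essential image of the fully faithful left adjoint to restriction along $\idln\hookrightarrow\alln$ (\cref{definition:composite-objects}, \cref{proposition:Composite-objects-as-Kan-extensions}); Moritability is exactly what makes these left Kan extensions exist and interact well with the Segal maps. With that definition the simplicial structure is inherited from the functoriality of $\Lax(-,\DD)$ in the source, the algebras-and-bimodules description becomes an identification of essential images rather than a construction, and the universal property reduces to a formal Kan-extension and cofree-fibration argument (\cref{proposition:Partially-cofree-fibrations}) rather than an open-ended coherence check. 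To salvage your route you would essentially have to reprove this: realize your bar constructions as the values of the left adjoint $i_!$ above, at which point you have rediscovered the paper's definition.
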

	
	Intuitively, one can think of $\Mor(\DD)$ as the double $\infty$-category of ``algebras'' in $\DD$ and ``bimodules'' between them, where composition is given by the relative tensor product. In particular, when applying \cref{theoremB:Morita} to a monoidal $\infty$-category $\cC$ (viewed as a double $\infty$-category via \cref{example:Moniodal-as-double}), one recovers its usual Morita double $\infty$-category as defined in \cite{Haugseng2017HigherMoritaCategory,Haugseng2023RemarksHigherMorita} and \cite[\S 4.4]{HA}. We believe that \cref{theoremB:Morita} is an interesting result on its own. To the author's knowledge, even in the case of a monoidal $\infty$-category $\cC$, such a universal property was not known. Note that a monoidal $\infty$-category $\cC$ is Moritable precisely if the underlying $\infty$-category of $\cC$ admits geometric realizations and the monoidal product preserves these in both variables separately; these are the usual conditions that ensure the existence and the associativity of the relative tensor product.
	
	\begin{remark}
		An analog of \cref{theoremB:Morita} for ordinary (virtual) double categories was proved by Cruttwell--Shulman \cite[Proposition 5.14]{CruttwellShulman2010UnifiedFrameworkGeneralized}.
	\end{remark}
	
	Unfortunately, the proof of \cref{theoremB:Morita} is quite technical. This paper is therefore split into two parts: In \cref{part1}, we prove \cref{theoremA}, essentially using \cref{theoremB:Morita} as a black box. In \cref{PartII}, we then prove \cref{theoremB:Morita}.
	
	\paragraph{\textbf{Overview of the proof of \cref{theoremA}}}
	
	After treating some background on double $\infty$-categories in \cref{section:double-categories,section:doublespans}, we prove a preliminary straightening equivalence in \cref{section:first-straightening}: we show that for any Segal space $\cC$, there is an equivalence between the $\infty$-category $\Seg(\Spc)_{/\cC}$ of Segal spaces over $\cC$ and the $\infty$-category of lax functors from $\cC$ to a certain double $\infty$-category $\DSpan(\Spc)$ of spans (\cref{theorem:first-straightening}). This result is a straightforward consequence of a universal property of $\DSpan(\Spc)$ established by Haugseng in \cite[Corollary 3.11]{Haugseng2021SegalSpacesSpans}.
	
	In \cref{section:Morita-double-cat}, we show that $\DSpan(\Spc)$ is Moritable and hence that its Morita double $\infty$-category in the sense of \cref{theoremB:Morita} exists. We then prove \cref{theoremA} in \cref{section:second-straightening}: Using the universal property of $\Mor(\DSpan(\Spc))$ and our preliminary straightening result, we obtain a natural equivalence
	\[\Seg(\Spc)_{/\cC} \simeq \UnitLax(\cC, \Mor(\DSpan(\Spc))).\]
	\Cref{theoremA} is then established by restricting to an appropriate full double subcategory $\DCorr$ of $\Mor(\DSpan(\Spc))$.
	
	Various other definitions of double $\infty$-categories of correspondences exist in the literature. We show in \cref{section:comparison} that $\DCorr$ agrees with these.
	
	One may wonder if \cref{theoremA} can be used to deduce straightening equivalences for various types of fibrations between $\infty$-categories. This is indeed the case, and we will work this out for Conduché fibrations and locally cocartesian fibrations in \cref{section:other-types-of-fibrations}; other types of fibrations follow by similar arguments.

	For an overview of the proof of \cref{theoremB:Morita}, the reader is referred to \cref{PartII}.
	
	\paragraph{\textbf{Relation to other work}}
	
	A result very similar to \cref{theoremA} has been obtained by Kositsyn \cite[Corollary 5.21]{Kositsyn2021CompletenessMonadsTheories}. He shows that for a certain flagged $(\infty,2)$-category of correspondences $\twoCorr$, there is an equivalence between $(\Cat_\infty)_{/\cC}$ and a certain $\infty$-category $\mathrm{Hom}^{\mathrm{lax},\mathrm{unit},!}_{\mathrm{Cat}}(\cC,\Corr)$. However, the definition of this $\infty$-category is quite ad hoc and complicated. In particular, it depends on a specific collection of 1-morphisms in $\twoCorr$ which is not intrinsic to this flagged $(\infty,2)$-category and should be treated as extra data. As such, it violates the principle of equivalence. By using double $\infty$-categories, we obtain a much cleaner statement that does not depend on such a choice. Moreover, our proof of \cref{theoremA} is simpler and significantly shorter than Kositsyn's proof.
	
	Shortly after the first version of this paper appeared, an alternative proof of \cref{theoremA} appeared by Heine \cite[Corollary 1.9]{Heine2024LocalglobalPrincipleParametrized}. The idea of his proof is quite different: he shows that for any functor $\cD \to \cC$, one can construct its fiberwise free cocompletion and that this is a locally cartesian fibration. (This is also shown by Lurie in \cite[\href{https://kerodon.net/tag/05K8}{Tag 05K8}]{kerodon}.) Using straightening-unstraightening for locally cartesian fibration, one can then deduce \cref{theoremA}. His construction of the double $\infty$-category of correspondences is quite different, and we compare it to our construction in \cref{section:comparison}.
	
	\paragraph{\textbf{Outlook}} Our main result leads to several natural questions. Let us mention three such questions.
	
	\subparagraph{\textit{An $(\infty,2)$-categorical enhancement}} Observe that the $\infty$-category $(\Cat_{\infty})_{/\cC}$ is naturally an $(\infty,2)$-category: it inherits a $\Cat_\infty$-enrichment from $\Cat_\infty$. The $\infty$-category $\UnitLax(\cC,\DCorr)$ also admits a canonical refinement to an $(\infty,2)$-category. In future work, we will show that the equivalence of \cref{theoremA} refines to a natural equivalence between these $(\infty,2)$-categories.
	
	\subparagraph{\textit{An internal straightening equivalence}} Many of the proofs considered in this paper go through when $\Spc$ is replaced by other $\infty$-categories. It is likely that for a given $\infty$-topos $\cB$, one can obtain a version of \cref{theoremA} for (double) $\infty$-categories \emph{internal} to $\cB$.
	
	\subparagraph{\textit{A version of the main result for $(\infty,n)$-categories}} The proof of \cref{theoremA} uses properties of the category $\Delta$ which are also enjoyed by many other indexing categories: for example, by \cite{Kern2024AllSegalObjects} a version of \cref{theorem:universal-property-span-double-cat} also holds for Joyal's categories $\Theta_n$ which are used to model $(\infty,n)$-categories. If one can prove a version of \cref{theoremB:Morita} where $\Delta$ is replaced by $\Theta_n$ (or any other appropriate category of ``shapes''), then one should be able to deduce a version of \cref{theoremA} for $(\infty,n)$-categories and possibly also $(\infty,\infty)$-categories. Using methods similar to those of \cref{section:other-types-of-fibrations}, one could then deduce straightening-unstraightening equivalences for different kinds of fibrations between $(\infty,n)$-categories or $(\infty,\infty)$-categories. This will be the topic of future joint work with Félix Loubaton and Jaco Ruit.
	
	\paragraph{\textbf{Conventions}} Throughout this paper, we take a model-independent approach to the theory of $\infty$-categories. From now on, we will generally drop ``$\infty$'' from our notation: for example, we will write ``category'' for ``$\infty$-category'' and  $\Cat$ for the ($\infty$-)category of ($\infty$-)categories. Given a category $\cB$ and objects $p \colon \cC \to \cB$ and $q\colon \cD \to \cB$ in $\Cat_{/\cB}$, we will write $\Fun_{/\cB}(\cC,\cD)$ for the category $\Fun(\cC,\cD) \times_{\Fun(\cC,\cB)} \{q\}$.
	
	\paragraph{\textbf{Acknowledgements}} The author wishes to thank Fernando Abellán, Rune Haugseng, Louis Martini and Jaco Ruit for many interesting discussions related to this work. He is also grateful to Jaco Ruit for introducing him to the world of double categories. Furthermore, he wishes to thank Robert Burklund for raising the question of whether every functor can be straightened. Finally, he thanks Nathanael Arkor for providing references to analogs of Theorems \ref{theoremA} and \ref{theoremB:Morita} in ordinary (double) category theory.
	
	The author is grateful to the Copenhagen Centre for Geometry and Topology (DNRF151) and the Max Planck Institute for Mathematics in Bonn for their hospitality during the writing of this paper. During a part of this period he was supported by a grant from the Knut and Alice Wallenberg Foundation.

\part{Straightening every functor}\label{part1}

\section{Background on double categories}\label{section:double-categories}

In this section, we give a brief introduction to the theory of double categories. For more details, the reader is referred to the excellent introduction \cite[\S 2]{Ruit2023FormalCategoryTheory}.

Recall that $\Delta$ is the category of finite non-empty linearly ordered sets. We will write $[n]$ for the linearly ordered set $\{0 < 1 < \cdots < n\}$ with $n+1$ elements. A map in $\Delta$ will be called \emph{idle} if it is a surjection onto a convex subset and \emph{inert} if it is furthermore injective. Observe that up to (unique) isomorphism, any inert map is a subinterval inclusion of the form $[i,j] \coloneqq \{i,i+1,\cdots,j\} \hookrightarrow [n]$; we will denote this inclusion by $\rho_{i,j}$. Finally, a map will be called \emph{active} if it preserves the minimal and maximal elements.

\begin{definition}[Segal objects]
	Let $\cC$ be a category. A functor $X_\bullet \colon \Delta^\op \to \cC$ is called a \emph{Segal object in $\cC$} if for every $n \geq 1$, the map
	\[X_n \to X_1 \times_{X_0} \cdots \times_{X_0} X_1\]
	induced by the maps $\rho_{i,i+1}$ and $\rho_i$ is an equivalence. It is called complete if moreover the map
	\[X_0 \to X_3 \times_{X_1 \times X_1} (X_0 \times X_0)\]
	is an equivalence. The full subcategory of $\Fun(\Delta^\op,\cC)$ spanned by the Segal objects is denoted $\Seg(\cC)$.
\end{definition}

\begin{example}[Complete Segal spaces]
	A (complete) Segal space is a (complete) Segal object in the category $\Spc$ of spaces. By \cite{JoyalTierney2007QuasicategoriesVsSegal}, the category $\CSeg(\Spc)$ of complete Segal spaces is equivalent to $\Cat$.
\end{example}
		
\begin{definition}[Double categories]
	A \emph{double category} is a Segal object $\DD_\bullet$ in $\Cat$. A morphism in $\Fun(\Delta^\op,\Cat)$ between double categories will be called a \emph{double functor}, or \emph{functor} for short, and the category of double categories will be denoted $\DblCat$.
\end{definition}

\begin{remark}
	Some authors also require that a double category be complete. We will not use this convention, since our main example, the double category $\DProf$ of correspondences defined in \cref{definition:double-cat-of-profunctors}, is \emph{not} complete.
\end{remark}

\begin{example}[Monoidal categories]\label{example:Moniodal-as-double}
	A \emph{monoidal category} is a double category $\DD$ such that $\DD_0$ is contractible (cf.\ \cite[Definition 4.1.3.6]{HA}).
\end{example}

\begin{example}[2-categories]\label{example:2-categories}
	A \emph{2-category} is a double category $\DD$ that is complete and such that $\DD_0$ is a space. (In the literature, these are often called \emph{complete 2-fold Segal spaces}.)
\end{example}

\begin{example}[Horizontal embedding]\label{example:Horizontal-embedding}
	Any Segal space $\cC \colon \Delta^\op \to \Spc$ can be viewed as a double category by postcomposing with the inclusion $\Spc \hookrightarrow \Cat$. We will write $\cC^\hor$ for the resulting double category and call it the \emph{horizontal embedding} of $\cC$.
\end{example}

\begin{example}[Vertical embedding]\label{example:Vertical-embedding}
Given a category $\cC$, one can consider the constant simplicial object $\Delta^\op \to \Cat$ with value $\cC$. This object will be denoted $\cC^\ver$ and called the \emph{vertical embedding} of $\cC$. Observe that any constant simplicial object satisfies the Segal condition, hence this defines a fully faithful functor $(-)^\ver \colon \Cat \to \DblCat$.
\end{example}

\begin{definition}[Mapping categories]
	Let $\DD$ be a double category and let objects $x,y \in \DD_0$ be given. Then the \emph{mapping category} $\DD(x,y)$ is defined by the pullback
	\[\begin{tikzcd}
		\DD(x,y) \ar[r," "] \ar[dr,"\lrcorner",very near start, phantom] \ar[d," "] & \DD_1 \ar[d,"{(d_1,d_0)}"]\\
		* \ar[r,"{(x,y)}"] & \DD^{\times 2}_0.
	\end{tikzcd}\]
	Objects in $\DD(x,y)$ will be called \emph{horizontal morphisms} or \emph{horizontal arrows}.
\end{definition}

\begin{example}[Horizontal opposite]\label{example:Horizontal-opposite}
	Observe that the functor $(-)^\op \colon \Cat \to \Cat$ restricts to a functor $(-)^\op \colon \Delta \to \Delta$. Given a double category $\DD$, its \emph{horizontal opposite} $\DD^\hop$ is defined as the composition of the functors $(-)^\op \colon \Delta^\op \to \Delta^\op$ and $\DD \colon \Delta^\op \to \Cat$. Observe that $\DD^\hop$ is again a double category and that its mapping categories are given by $\DD^\hop(x,y) \simeq \DD(y,x)$.
\end{example}

\begin{definition}[Horizontal composition]
	Let $\DD$ be a double category and let objects $x,y,z \in \DD_0$ be given. By pulling back the map
	\[\DD_1 \times_{\DD_0} \DD_1 \simeq \DD_2 \xrightarrow{d_1} \DD_1\]
	along $(x,y,z) \colon * \to \DD_0^{\times 3}$, we obtain the \emph{horizontal composition functor}
	\[\DD(x,y) \times \DD(y,z) \to \DD(x,z).\]
	The degeneracy map $\DD_0 \to \DD_1$ yields, for every $x$ in $\DD_0$, a \emph{horizontal identity morphism} $\unit_x \in \DD(x,x)$.
\end{definition}

Given a double category $\DD$, one can define subobjects of $\DD$ by appropriately choosing full subcategories of the categories $\DD_n$.

\begin{construction}[Locally full double subcategories]\label{definition:locally-full-subcategory}
	Let $\DD$ be a double category and let $\DE_0 \subset \DD_0$ and $\DE_1 \subset \DD_1$ be full subcategories. Suppose that
	\begin{enumerate}[(a)]
		\item the source-target projection $\DE_1 \to \DD_0^{\times 2}$ lands in $\DE_0^{
		\times 2}$,
		\item the degeneracy map $s \colon \DE_0 \to \DD_1$ lands in $\DE_1$, and
		\item the horizontal composition $\DD_1 \times_{\DD_0} \DD_1 \simeq \DD_2 \to \DD_1$ takes $\DE_1 \times_{\DE_0} \DE_1$ to $\DE_1$.
	\end{enumerate}
	Define $\DE_n$ to be the full subcategory of $\DD_n$ corresponding to $\DE_1 \times_{\DE_0} \cdots \times_{\DE_0} \DE_1$ under the equivalence $\DD_n \simeq \DD_1 \times_{\DD_0} \cdots \times_{\DD_0} \DD_1$. Then for any $\sigma \colon [n] \to [m]$ in $\Delta$, the map $\sigma^* \colon \DD_m \to \DD_n$ restricts to a map $\DE_m \to \DE_n$, hence $\DE_\bullet$ forms a subobject of $\DD$. A double category $\DE$ constructed in this way will be called a \emph{locally full (double) subcategory}. If $\DE_1$ consists of \emph{all} objects of $\DD_1$ whose image under the source-target projection lands in $\DE_0^{\times 2}$, then we call $\DE$ a \emph{full (double) subcategory} of $\DD$.
\end{construction}

Since the mapping categories $\DD(x,y)$ of a double category $\DD$ are \emph{categories} and not merely \emph{spaces}, one can loosen the definition of a double functor by requiring that horizontal composition is only preserved up to a \emph{non-invertible} morphism in $\DD(x,y)$. A convenient way to make this precise is as follows.

\begin{definition}[Lax functors]\label{definition:lax-functors}
	Let $\DD, \DE \colon \Delta^\op \to \Cat$ be double categories and let $\un \DD, \un \DE \to \Delta^\op$ denote their cocartesian unstraightenings. A \emph{lax functor} $F$ from $\DD$ to $\DE$ is a commutative triangle
	\[\begin{tikzcd}[column sep=tiny]
		\int \DD \ar[rr,"F'"] \ar[dr] && \int \DE \ar[dl] \\
		& \Delta^\op &
	\end{tikzcd}\]
	such that $F'$ preserves cocartesian lifts of inerts. If $F'$ moreover preserves cocartesian lifts of idle maps, then $F$ is called a \emph{unital lax functor}.
	
	We write $\Lax(\DD,\DE)$ for the full subcategory of $\Fun_{/\Delta^\op}(\un \DD,\un \DE)$ spanned by the lax functors, $\UnitLax(\DD,\DE)$ for the full subcategory of unital lax functors and $\Fun(\DD,\DE)$ for the full subcategory spanned by those functors that preserve all cocartesian lifts. The categories of double categories and (unital) lax functors between them will be denoted $\DblCatlax$ and $\DblCatulax$.
\end{definition}

\begin{remark}
	In \cref{PartII}, we will also use the notation $\Lax(\DD,\DE)$, $\UnitLax(\DD,\DE)$ and $\Fun(\DD,\DE)$ when $\DD$ and/or $\DE$ are not double categories---that is, when $\DD$ and $\DE$ are functors $\Delta^\op \to \Cat$ that do not necessarily satisfy the Segal condition.
\end{remark}

\begin{remark}
	In \cref{lemma:Unital-lax-envelope}, we relate unital lax functors in the sense of \cref{definition:lax-functors} to the orientals defined in \cite{Street1987AlgebraOrientedSimplexes}.
	This can be seen as a justification for the definition of unital lax functors given here.
\end{remark}

\begin{lemma}\label{lem:double-functors-equivalence}
	Let $F,G \colon \DD \to \DE$ be functors between double categories and let $\nu \colon  F \Rightarrow G$ be a morphism in $\Fun(\DD,\DE)$.
	Restricting to the fiber over $[1]$ in $\Delta^\op$, we obtain a natural transformation $\nu_1 \colon F_1 \Rightarrow G_1$ in $\Fun(\DD_1, \DE_1)$.
	Then $\nu \colon F \Rightarrow G$ is an equivalence if and only if $\nu_1 \colon F_1 \Rightarrow G_1$ is an equivalence.
\end{lemma}

\begin{proof}
	We need to show that $\nu_n \colon F_n \Rightarrow G_n$ is an equivalence for every $n \geq 0$.
	When $n=0$, this follows since $[0]$ is a retract of $[1]$ in $\Delta$.
	For $n > 1$, this follows from the Segal condition.
\end{proof}

Throughout this paper, we will also have use for a slightly more general notion of Segal objects. Suppose $\cD \to \Delta^\op$ is a functor that has cocartesian lifts of all inert morphisms. Given an object $d$ of $\cD$ lying above $[n]$, we can then pick cocartesian lifts $d \to d_{i,j}$ of the morphisms $\rho_{i,j} \colon [i,j] \hookrightarrow [n]$. This allows us to express a Segal condition for functors out of $\cD$.

\begin{definition}[Generalized Segal objects]\label{definition:generalized-Segal-objects}
	Let $\cD \to \Delta^\op$ be a functor that has cocartesian lifts of inert morphisms and let $\cC$ be a category that admits pullbacks. For any functor $F \colon \cD \to \cC$ and any object $d \in \cD$ lying over $[n]$, we obtain the \emph{Segal map (of $F$ at $d$)}
	\[F(d) \to F(d_{0,1}) \times_{F(d_{1,1})} \cdots \times_{F(d_{n-1,n-1})} F(d_{n-1,n})\]
	by taking cocartesian lifts of the maps $\rho_{i,i+1}$ and $\rho_{i,i}$. We say that a functor $F \colon \cD \to \cC$ \emph{satisfies the Segal condition} if for every $d$ in $\cD$, the Segal map of $F$ at $d$ is an equivalence. The category of such functors $\cD \to \cC$ will be denoted $\Seg_{\cD}(\cC)$. If $\cD$ is of the form $\un \DD$ for some double category $\DD$, then we will simply write $\Seg_{\DD}(\cC)$ for $\Seg_{\un \DD}(\cC)$.
\end{definition}

Observe that the construction of the category $\Seg_{\cD}(\cC)$ is functorial in both $\cD$ and $\cC$ if we restrict to functors that preserve cocartesian lifts of inerts and pullbacks, respectively.

\section{The double categories of spans}\label{section:doublespans}

We will now consider an important example of a double category, namely the span double category.

\begin{construction}[The span double category]
	Let $\Sigma^n$ denote the poset of convex subsets of $[n]$, ordered by reverse inclusion. An order-preserving map $\alpha \colon [n] \to [m]$ defines a map $\alpha_! \colon \Sigma^n \to \Sigma^m$ by sending $[i,j]$ to $[\alpha(i),\alpha(j)]$, making $\Sigma^\bullet$ into a functor $\Delta \to \Cat$. Given a category $\cC$, let $\DSpan(\cC)_n$ be the full subcategory of $\Fun(\Sigma^n,\cC)$ spanned by those functors $F \colon \Sigma^n \to \cC$ for which all squares of the form
	\[\begin{tikzcd}
		F[i,j] \ar[r," "] \ar[d," "] & F[i,j-1] \ar[d," "]\\
		F[i+1,j] \ar[r," "] & F[i+1,j-1]
	\end{tikzcd}\]
	are cartesian. It follows from the pasting lemma for pullback squares that restriction along any $\alpha_! \colon \Sigma^n \to \Sigma^m$ preserves functors with this property. In particular, the categories $\DSpan(\cC)_n$ assemble into a simplicial object $\DSpan(\cC)$ in $\Cat$, which we call the \emph{span double category of $\cC$}.
\end{construction}

It is proved in \cite[Proposition 5.14]{Haugseng2018IteratedSpansClassical} that $\DSpan(\cC)$ is indeed a double category.

\begin{example}[Mapping categories in $\DSpan(\cC)$]\label{example:Mapping-categories-DSpan}
	Observe that $\Sigma^1$ is isomorphic to the left cone $\{0,1\}^\triangleleft$. Moreover, the source-target projection $\DSpan(\cC) \to \cC^{\times 2}$ is obtained by restriction along the inclusion $\{0,1\} \hookrightarrow \{0,1\}^\triangleleft$. In particular, for any pair of objects $c,d \in \cC$, the mapping category $\DSpan(\cC)(c,d)$ is given by the overcategory $\cC_{/c,d}$. Observe that this agrees with $\cC_{/c \times d}$ if $\cC$ admits binary products. Moreover, it is not hard to verify that the horizontal composition functor $\DSpan(\cC)(c,d) \times \DSpan(\cC)(d,e) \to \DSpan(\cC)(c,e)$ is given by
	\[\left(
		\begin{tikzcd}[column sep={2em,between origins}, row sep={2em,between origins}]
			& x \ar[dr] \ar[dl] &\\
			c && d
		\end{tikzcd},
		\begin{tikzcd}[column sep={2em,between origins}, row sep={2em,between origins}]
			& y \ar[dr] \ar[dl] &\\
			d && e
		\end{tikzcd}
	\right)
	\begin{tikzcd}
		 \;\ar[r, maps to] & \;
	\end{tikzcd}
	\begin{tikzcd}[column sep={2em,between origins}, row sep={2em,between origins}]
		& x \times_d y \ar[dr] \ar[dl] &\\
		c && e
	\end{tikzcd}\]
\end{example}

Given functors $\cD,\cE \to \Delta^\op$ that admit cocartesian lifts of inerts, we will write $\Fun^\inert_{/\Delta^\op}(\cD,\cE)$ for the full subcategory of $\Fun_{/\Delta^\op}(\cD,\cE)$ of functors that preserve cocartesian lifts of inerts. By \cite[Corollary 3.11]{Haugseng2021SegalSpacesSpans}, the span double category has the following universal property for lax functors into it:

\begin{theorem}[The universal property of $\DSpan(\cC)$]\label{theorem:universal-property-span-double-cat}
	Let $\cC$ be a category that admits pullbacks and $\cD \to \Delta^\op$ a functor that admits cocartesian lifts of inerts. Then there is an equivalence of categories
	\[\Fun^\inert_{/\Delta^\op}(\cD,\DSpan(\cC)) \eqarrow \Seg_{\cD}(\cC)\]
	that is natural in both $\cC$ and $\cD$. In particular, if $\DD$ is a double category, then there is a natural equivalence
	\[\thmqed{\Lax(\DD,\DSpan(\cC)) \simeq \Seg_{\DD}(\cC).}\]
\end{theorem}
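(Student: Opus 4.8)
The plan is to deduce this theorem directly from the universal property established by Haugseng, which is stated in the first part of \cref{theorem:universal-property-span-double-cat}: for any $\cC$ admitting pullbacks and any $\cD \to \Delta^\op$ admitting cocartesian lifts of inerts, there is a natural equivalence $\Fun^\inert_{/\Delta^\op}(\cD,\DSpan(\cC)) \simeq \Seg_{\cD}(\cC)$. First I would observe that the final statement is simply the special case where $\cD = \un \DD$ is the cocartesian unstraightening of a double category $\DD$. So the entire content is to unwind the two definitions—that of $\Lax(\DD,\DSpan(\cC))$ and that of $\Seg_{\DD}(\cC)$—and check that they are precisely the two sides of the general equivalence instantiated at $\cD = \un \DD$.

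The key step is matching the source of the equivalence. By \cref{definition:lax-functors}, a lax functor $F$ from $\DD$ to $\DSpan(\cC)$ is a functor $F' \colon \un\DD \to \un{\DSpan(\cC)}$ over $\Delta^\op$ that preserves cocartesian lifts of inert morphisms. This is by definition exactly an object of $\Fun^\inert_{/\Delta^\op}(\un\DD, \DSpan(\cC))$, where I read $\DSpan(\cC)$ on the right as shorthand for its unstraightening $\un{\DSpan(\cC)}$. Hence $\Lax(\DD,\DSpan(\cC))$ \emph{is} the left-hand side of the general equivalence at $\cD = \un \DD$, with no further argument needed beyond this identification. The right-hand side is handled by the final sentence of \cref{definition:generalized-Segal-objects}, which fixes the notation $\Seg_{\DD}(\cC) \coloneqq \Seg_{\un\DD}(\cC)$; so the target matches on the nose as well.

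I would then simply invoke the first part of \cref{theorem:universal-property-span-double-cat} with $\cD = \un\DD$ to obtain the equivalence $\Lax(\DD,\DSpan(\cC)) \simeq \Seg_{\DD}(\cC)$, and note that naturality in $\cD$ (applied to double functors, which unstraighten to inert-preserving maps over $\Delta^\op$) and in $\cC$ is inherited from the naturality in the general statement. The main—and really only—subtlety is confirming that $\un\DD$ legitimately satisfies the hypothesis on $\cD$: namely, that the cocartesian unstraightening of a double category $\DD \colon \Delta^\op \to \Cat$ admits cocartesian lifts of inert morphisms. But this is automatic, since $\un\DD \to \Delta^\op$ is a \emph{cocartesian} fibration and therefore admits cocartesian lifts of \emph{all} morphisms, a fortiori of the inert ones; this is exactly why \cref{definition:lax-functors} can speak of $F'$ preserving cocartesian lifts in the first place. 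Thus no real obstacle arises, and the "In particular" claim is a direct specialization of the preceding universal property.
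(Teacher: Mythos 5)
Your proposal matches the paper's treatment: the general equivalence is quoted from Haugseng's Corollary 3.11, and the ``In particular'' clause is obtained exactly as you do, by specializing to $\cD = \un\DD$ (which, being a cocartesian fibration over $\Delta^\op$, certainly has cocartesian lifts of inerts) and unwinding \cref{definition:lax-functors} together with the convention $\Seg_{\DD}(\cC) = \Seg_{\un\DD}(\cC)$. The only point you elide is that Haugseng's result is stated for $\cD$ a generalized planar $\infty$-operad rather than an arbitrary functor with cocartesian lifts of inerts; the paper flags this in a follow-up remark, observing that his proof only ever uses the weaker hypothesis, so this is a presentational rather than a mathematical gap.
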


\begin{remark}
	In \cref{theorem:universal-property-span-double-cat}, we only assume that $\cD \to \Delta^\op$ has cocartesian lifts of inerts. This is a weaker condition than the one in Corollary 3.11 of \cite{Haugseng2021SegalSpacesSpans}, where it is assumed that $\cD$ is a generalized planar $\infty$-operad. However, the proof of loc.\ cit.\ shows that this weaker condition is sufficient.
\end{remark}

\section{A first straightening result}\label{section:first-straightening}

We will now prove our first straightening result: every functor $\cD \to \cC$ can be straightened to a lax functor $\cC^\hor \to \DSpan(\Spc)$. It turns out that this result holds already for Segal spaces that are not necessarily complete.

\begin{theorem}\label{theorem:first-straightening}
	Let $\cC$ be a Segal space. Then there is a natural equivalence of categories
	\[\Seg(\Spc)_{/\cC} \simeq \Lax(\cC^\hor,\DSpan(\Spc)).\]
\end{theorem}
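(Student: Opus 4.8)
The plan is to combine the universal property of the span double category (\cref{theorem:universal-property-span-double-cat}) with an identification of $\Seg_{\cC^\hor}(\Spc)$ with the slice category $\Seg(\Spc)_{/\cC}$. The theorem gives us, for free, a natural equivalence
\[
\Lax(\cC^\hor, \DSpan(\Spc)) \simeq \Seg_{\cC^\hor}(\Spc),
\]
so the entire content of the statement is to produce a natural equivalence
\[
\Seg_{\cC^\hor}(\Spc) \simeq \Seg(\Spc)_{/\cC}.
\]
The right-hand side should be thought of as the total-space description and the left-hand side as the fiberwise description, so morally this is an instance of the straightening--unstraightening philosophy: a Segal space over $\cC$ is the same as a functor out of (the unstraightening of) $\cC$ recording fibers.

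**First I would unwind** what $\Seg_{\cC^\hor}(\Spc)$ means. By \cref{example:Horizontal-embedding}, $\cC^\hor$ is the double category obtained by composing the Segal space $\cC \colon \Delta^\op \to \Spc$ with $\Spc \hookrightarrow \Cat$; its cocartesian unstraightening $\un(\cC^\hor) \to \Delta^\op$ is then a left fibration, since all the fibers $\cC_n$ are spaces. By \cref{definition:generalized-Segal-objects}, an object of $\Seg_{\cC^\hor}(\Spc)$ is a functor $F \colon \un(\cC^\hor) \to \Spc$ satisfying the Segal condition at every object. I would then use straightening-unstraightening for left fibrations (or for functors into $\Spc$) to identify $\Fun(\un(\cC^\hor), \Spc)$ with left fibrations over $\un(\cC^\hor)$, equivalently with functors over $\Delta^\op$, and track how the Segal condition on $F$ at objects of $\un(\cC^\hor)$ translates into the Segal condition for the resulting simplicial space over $\cC$. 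Concretely, given such an $F$, one forms the simplicial space $[n] \mapsto \int_{c \in \cC_n} F(c)$ (the total space of the left fibration restricted to each fiber), and the Segal condition on $F$ is precisely what guarantees that this simplicial space is Segal and that the induced map to $\cC$ is compatible with the Segal structure. The inverse construction takes a Segal space $\cD \to \cC$ and assigns to each object $c \in \cC_n = (\cC^\hor)_n$ lying over $[n]$ the fiber $\cD_n \times_{\cC_n} \{c\}$.

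**The hard part will be** verifying that the Segal condition matches up correctly under this correspondence, and in particular that the fiberwise data assembles into a genuine Segal space over $\cC$ rather than just a simplicial space. The subtlety is that the Segal maps for the total object $\cD$ are computed as pullbacks over $\cC_0$ and $\cC_1$, whereas the Segal maps in $\Seg_{\cC^\hor}(\Spc)$ are the pullbacks appearing in \cref{definition:generalized-Segal-objects}, taken over the fibers $F(d_{i,i})$; I would need the compatibility of these two pullback descriptions, which comes down to the fact that the inert-cocartesian lifts in $\un(\cC^\hor)$ encode exactly the face maps $\rho_{i,j}$ of $\cC$. I expect this to follow from a careful application of the pasting lemma for pullbacks together with the Segal condition on $\cC$ itself, exactly as in the analogous arguments for left fibrations. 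Naturality in $\cC$ on both sides is then a formality: the slice $\Seg(\Spc)_{/\cC}$ is functorial via pullback along $\cC' \to \cC$, and $\Seg_{(-)^\hor}(\Spc)$ is functorial via restriction along the induced map of unstraightenings, and the two constructions above manifestly commute with these operations, so the equivalence is natural.

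**Alternatively**, one could avoid the explicit fiberwise bookkeeping by invoking \cref{theorem:universal-property-span-double-cat} with $\cD = \un(\cC^\hor)$ and then directly producing the equivalence $\Seg_{\cC^\hor}(\Spc) \simeq \Seg(\Spc)_{/\cC}$ as a special case of a general statement: for a Segal space $\cC$, functors $\un(\cC^\hor) \to \Spc$ satisfying the generalized Segal condition are the same as Segal spaces equipped with a map to $\cC$. This is essentially the observation that $\un(\cC^\hor) \simeq \Delta^\op \times_{\cC} \widetilde{\cC}$ for the appropriate total space, so Segal objects over it correspond to Segal objects over $\cC$. I would present the argument in whichever form is cleanest, but in either case the genuine mathematical input is the single translation between the fiberwise and total descriptions of a Segal space over $\cC$, with \cref{theorem:universal-property-span-double-cat} supplying the passage to lax functors into $\DSpan(\Spc)$ at no extra cost.
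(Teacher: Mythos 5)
Your proposal follows essentially the same route as the paper: the paper also reduces to \cref{theorem:universal-property-span-double-cat} and identifies $\Seg_{\cC^\hor}(\Spc)$ with $\Seg(\Spc)_{/\cC}$ by noting that $\un(\cC^\hor) \to \Delta^\op$ is the left fibration $\Delta^\op_{/\cC} \to \Delta^\op$, so that left Kan extension along it (your ``total space of the restricted left fibration'' construction) gives $\Fun(\Delta^\op_{/\cC},\Spc) \simeq \Fun(\Delta^\op,\Spc)_{/\cC}$, which then restricts to Segal objects (citing \cite[Prop.\ 7.8 and 7.9]{Haugseng2015RectificationEnrichedCategories} for these two steps). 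The matching of fiberwise and total Segal conditions that you flag as the hard part is exactly the content of the cited Prop.\ 7.9--style argument, so your plan is correct.
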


\begin{proof}[Proof of \cref{theorem:first-straightening}]
	Let $\cC \colon \Delta^\op \to \Spc$ be a Segal space. By cocartesian straightening, we obtain the left fibration $\int \cC = \Delta_{/\cC}^\op \to \Delta^\op$. By \cite[Prop.\ 7.8]{Haugseng2015RectificationEnrichedCategories}, it follows that left Kan extension along $\Delta_{/\cC}^\op \to \Delta^\op$ induces an equivalence
	\[\Fun(\Delta_{/\cC}^\op,\Spc) \eqarrow \Fun(\Delta^\op, \Spc)_{/\cC}.\]
	Naturality in $\cC$ follows as in the proof of \cite[Theorem 7.5]{Haugseng2015RectificationEnrichedCategories}.
	
	By an argument similar to \cite[Prop.\ 7.9]{Haugseng2015RectificationEnrichedCategories}, it follows that this map restricts to an equivalence
	\[\Seg_{\cC^\hor}(\Spc) \simeq \Seg(\Spc)_{/\cC}.\]
	Combined with \cref{theorem:universal-property-span-double-cat}, the desired equivalence follows.
\end{proof}
	
\section{The Morita double category}\label{section:Morita-double-cat}

If $\cC$ is a monoidal category, then under mild assumptions on $\cC$ one can construct its \emph{Morita category}. The objects of this Morita category are algebras in $\cC$, while its morphisms are bimodules and composition is given by the relative tensor product. Under suitable assumptions, this construction can be generalized to double categories: if $\DD$ is a \emph{Moritable} double category (see \cref{def:Moritable}), then one can construct its \emph{Morita double category} $\Mor(\DD)$. This double category moreover satisfies the following universal property.

\begin{theorem}\label{theorem:Morita-category-existence-and-universal-property}
	Let $\DD$ be a Moritable double category in the sense of \cref{def:Moritable}. Then there exists a double category $\Mor(\DD)$, its \emph{Morita double category}, satisfying the universal property that for any double category $\DE$, there is a natural equivalence
	\[\Lax(\DE,\DD) \simeq \UnitLax(\DE,\Mor(\DD)).\]
\end{theorem}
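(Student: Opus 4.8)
The plan is to construct $\Mor(\DD)$ explicitly as the double category of algebras and bimodules in $\DD$, and then to verify the universal property by exhibiting a single universal lax functor through which every lax functor into $\DD$ factors essentially uniquely among unital lax functors into $\Mor(\DD)$.

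First, I would build $\Mor(\DD)$ level-wise. Take $\Mor(\DD)_0$ to be the category of algebras in $\DD$: objects $A \in \DD(x,x)$ for some $x \in \DD_0$, equipped with a unit $\id_x \to A$ and a multiplication $A \circ A \to A$ satisfying the usual coherences (encoded by an appropriate Segal-type diagram in the mapping categories of $\DD$). More generally, let $\Mor(\DD)_n$ be the category of sequences $A_0, M_1, A_1, \dots, M_n, A_n$ of algebras $A_i$ and bimodules $M_i$. The degeneracy maps insert an algebra regarded as the identity bimodule over itself, while the inner face maps are given by the relative tensor product $M_i \otimes_{A_i} M_{i+1}$, formed as the geometric realization of the two-sided bar construction inside the mapping categories of $\DD$. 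This is exactly where the Moritable hypothesis of \cref{def:Moritable} enters: it guarantees that these geometric realizations exist and are preserved by horizontal composition in each variable separately, which is what makes the face maps well defined and the relative tensor product associative. Consequently $\Mor(\DD)_\bullet$ satisfies the Segal condition and defines a double category, generalizing the Morita category of a monoidal category from \cite[\S 4.4]{HA}.

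Next I would produce the universal lax functor. There is a canonical lax functor $u \colon \Mor(\DD) \laxfun \DD$ forgetting algebra and bimodule structure: it sends an algebra $A$ to its underlying object $x \in \DD_0$ and a bimodule $M$ to its underlying morphism in $\DD_1$. Its laxity comparison on a composite is the canonical map $u(M) \circ u(N) \to M \otimes_A N$ arising from the $0$-simplices of the bar construction, which is in general non-invertible; moreover $u$ fails to preserve units, since the identity bimodule on $A$ has underlying morphism $A$ rather than $\id_x$, the discrepancy being recorded by the unit $\id_x \to A$. Thus $u$ is genuinely lax but not unital lax. The statement to verify is that for every double category $\DE$, postcomposition with $u$ defines an equivalence
\[\UnitLax(\DE, \Mor(\DD)) \xrightarrow{\ \sim\ } \Lax(\DE, \DD), \qquad F \longmapsto u \circ F,\]
where one uses that the composite of a unital lax functor with a lax functor is again lax. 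Naturality in $\DE$ is then automatic, as the map is induced by the fixed functor $u$.

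The content lies in constructing the inverse. Given a lax functor $F \colon \DE \laxfun \DD$, I would assign to each object $e$ of $\DE$ the algebra in $\DD$ with underlying object $F(e)$, whose multiplication and unit are read off from the composition- and unit-laxity data of $F$ at $\id_e$; to each morphism of $\DE$ the evident bimodule; and to each composite the relative tensor product, which agrees with $F$ of the composite precisely because the lax comparison maps of $F$ assemble into a bar construction. The resulting $\tilde F \colon \DE \to \Mor(\DD)$ is unital lax, as the unit-laxity of $F$ has been absorbed into the algebra structure and is therefore strictly preserved relative to the units of $\Mor(\DD)$. The main obstacle is to perform this assignment coherently and functorially in $\DE$, matching $u \circ (-)$ on all simplices rather than merely on objects. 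Concretely, working over $\Delta^\op$ with the active--inert factorization, one must show that a map $\un \DE \to \un \DD$ preserving inert cocartesian lifts carries the same information as a map $\un \DE \to \un \Mor(\DD)$ preserving idle cocartesian lifts; the gap between these two conditions is exactly the active surjections (the degeneracies encoding units), and reconciling it amounts to the coherence of the two-sided bar construction across the entire simplicial direction. This bookkeeping, rather than any isolated conceptual difficulty, is where the technical weight of the argument sits, which is why a fully coherent proof is relegated to the appendix and to forthcoming work.
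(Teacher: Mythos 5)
Your proposal correctly identifies the intended picture: $\Mor(\DD)$ should consist of algebras and bimodules with composition given by two-sided bar constructions, the equivalence should be induced by a forgetful lax functor $u \colon \Mor(\DD) \laxfun \DD$ whose laxity and unit constraints are the canonical maps into the relative tensor product and the algebra units (so that $u$ is lax but not unital lax), and the Moritable hypothesis is what makes the relevant geometric realizations exist and interact well with horizontal composition. That much matches the paper. The genuine gap is in your first step. In the $\infty$-categorical setting you cannot define an object of $\Fun(\Delta^\op,\Cat)$ by declaring its levels to be ``sequences $A_0, M_1, \dots, M_n, A_n$'' and then specifying inner face maps by relative tensor product: the relative tensor product is associative and unital only up to coherent homotopy, so the simplicial identities and all higher coherences cannot be imposed by hand. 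This is not deferrable bookkeeping --- with your definition of the levels there is no candidate simplicial object for which to verify the Segal condition. The device the paper uses (visible from how $\Mor(\DSpan(\cC))_n$ is handled in the comparison with Ruit's construction) is to define $\Mor(\DD)_n$ as a full subcategory of $\Lax([n]^\hor,\DD) \simeq \Fun^\inert_{/\Delta^\op}(\alln, \un\DD)$: an object records not only the algebras and bimodules but all of their partial composites and comparison maps, indexed by $\alln$, subject to the condition that each composite is exhibited as a colimit, i.e.\ that the diagram is left Kan extended from $\idln$. Functoriality in $[n]$ is then inherited from $[n] \mapsto \alln$, and the Segal condition follows because restriction along $\idln \hookrightarrow \alln$ admits a fully faithful left adjoint $i_!$ (this is where Moritability enters) whose essential image is exactly $\Mor(\DD)_n$.

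The same issue affects your inverse construction for the universal property. Reading off an algebra from the laxity data of $F$ at identities and a bimodule from each arrow produces the correct objects and horizontal $1$-morphisms, but one must produce a functor $\un\DE \to \un\Mor(\DD)$ over $\Delta^\op$ coherently; with the corrected definition of $\Mor(\DD)$ this becomes a comparison of two partially cofree constructions, namely that $[n] \mapsto \Fun^\inert_{/\Delta^\op}(\alln,\un\DD)$ represents $\Lax(-,\DD)$ while $[n] \mapsto \Fun^\idle_{/\Delta^\op}(\alln,\un\Mor(\DD))$ represents $\UnitLax(-,\Mor(\DD))$, and that the composite-object condition identifies the two. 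You gesture at this (``the gap between these two conditions is exactly the active surjections'') but supply no mechanism for closing it. Two smaller points: the Moritability hypothesis also involves a fibrancy condition on the source--target projection $\DD_1 \to \DD_0^{\times 2}$ (cf.\ the verification in \cref{proposition:DSpan-is-Moritable}), which your summary omits; and your laxity constraint should read $u(M) \circ u(N) \to u(M \otimes_A N)$.
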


Since the proof of this theorem is quite long and technical, we give it in \cref{PartII} below and use it as a black box for now.

The proof of \cref{theoremA} follows by applying this result to the span double category $\DSpan(\Spc)$. For this, we need to know that $\DSpan(\Spc)$ is Moritable.

\begin{proposition}\label{proposition:DSpan-is-Moritable}
	Let $\cC$ be a category that admits pullbacks and geometric realizations, and suppose that for any map $f \colon c \to d$ in $\cC$, the pullback functor $f^* \colon \cC_{/d} \to \cC_{/c}$ preserves geometric realizations. Then $\DSpan(\cC)$ is Moritable in the sense of \cref{def:Moritable}.
\end{proposition}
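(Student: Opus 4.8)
The plan is to unwind the definition of a Moritable double category (\cref{def:Moritable}) into explicit conditions on $\DSpan(\cC)$ and then verify each of them from the hypotheses on $\cC$. Concretely, I expect Moritability of $\DSpan(\cC)$ to reduce to the two requirements that (i) every mapping category $\DSpan(\cC)(c,d)$ admits geometric realizations, and (ii) for every triple $c,d,e$ the horizontal composition functor
\[\DSpan(\cC)(c,d) \times \DSpan(\cC)(d,e) \longrightarrow \DSpan(\cC)(c,e)\]
preserves geometric realizations in each variable separately---these being precisely the conditions that guarantee the existence and associativity of the relative tensor product. By \cref{example:Mapping-categories-DSpan}, the mapping category $\DSpan(\cC)(c,d)$ is the slice category $\cC_{/c,d}$ of spans $c \leftarrow x \to d$, and horizontal composition sends a pair of composable spans to their pullback $x \times_d y$ over $d$.

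To verify (i), I would use that $\cC_{/c,d}$ is the fiber over $(c,d)$ of the evaluation functor $\Fun(\Sigma^1,\cC) \to \Fun(\{0,1\},\cC)$, where $\Sigma^1 \cong \{0,1\}^\triangleleft$. Since geometric realizations in a functor category are computed pointwise, and since a constant simplicial object is its own geometric realization, the pointwise geometric realization of a simplicial object of $\cC_{/c,d}$ keeps the values at $0$ and $1$ constant. Hence $\cC_{/c,d}$ admits geometric realizations, computed at the apex by the corresponding geometric realization in $\cC$; equivalently, the forgetful functor $\cC_{/c,d} \to \cC$ remembering the apex creates them, as do the forgetful functors $\cC_{/d} \to \cC$ and $\cC_{/y} \to \cC$ that appear below.

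For (ii), I fix a span $(d \leftarrow y \to e)$ and treat horizontal composition as the functor $\cC_{/c,d} \to \cC_{/c,e}$, $x \mapsto x \times_d y$. By (i) it suffices to check that the apex of the image geometric realization is computed correctly, i.e.\ that $|x_\bullet| \times_d y \simeq |x_\bullet \times_d y|$ in $\cC$. Remembering only the map to $d$, I regard $x_\bullet$ as a simplicial object of $\cC_{/d}$ with geometric realization $|x_\bullet|$; then $(-) \times_d y$ is exactly the pullback functor $g^* \colon \cC_{/d} \to \cC_{/y}$ along $g \colon y \to d$, which preserves geometric realizations by hypothesis. Thus $g^*|x_\bullet| \simeq |g^* x_\bullet|$ over $y$, and forgetting to $\cC$ gives the required equivalence at the apex; the induced maps to $c$ and $e$ are automatic because geometric realizations in $\cC_{/c,e}$ depend only on the apex. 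The second variable is handled identically, using pullback along $x \to d$. Together with (i), this shows $\DSpan(\cC)$ is Moritable, so that \cref{theorem:Morita-category-existence-and-universal-property} produces $\Mor(\DSpan(\cC))$.

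The part I expect to require the most care is not any single verification above---each follows directly from the stated hypotheses---but rather matching these pointwise statements to the precise internal formulation of Moritability in \cref{def:Moritable}, which is phrased in terms of the whole simplicial object $\DSpan(\cC)$ rather than its mapping categories. The crucial observation that makes everything go through is that the defining cartesian squares of $\DSpan(\cC)_n$ interact with geometric realizations exactly through pullback functors of the form $f^*$, so the single hypothesis that each $f^*$ preserves geometric realizations is what feeds every instance of the Segal/associativity compatibility one must check.
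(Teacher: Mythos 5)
Your verifications of the two conditions you isolate are correct and essentially identical to the paper's: the mapping categories $\DSpan(\cC)(c,d) \simeq \cC_{/c,d}$ admit geometric realizations created by the forgetful functor to $\cC$ (the paper cites \cite[Proposition 1.2.13.8]{HTT} for this), and horizontal composition preserves them in each variable separately because, after fixing one span, it is computed by a pullback functor $g^*$ along a leg of the other span, which preserves geometric realizations by hypothesis.

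The gap is in your unwinding of \cref{def:Moritable} itself. The reduction of Moritability to just (i) and (ii) is the one valid when $\DD_0$ is contractible, i.e.\ for monoidal categories --- this is exactly the special case described in the introduction. For a general double category the definition imposes an additional requirement on the source--target projection $\DD_1 \to \DD_0^{\times 2}$, and the paper's proof accordingly has a third step: it verifies that $\DSpan(\cC)_1 \to \cC \times \cC$ is a \emph{bicartesian fibration}, identifying the cocartesian lifts as the morphisms of spans whose middle component is an equivalence and the cartesian lifts as those of the form $c \times_{c'} a' \times_{d'} d \to a'$. This is precisely the ``matching of the pointwise statements to the internal formulation of Moritability'' that you flagged as the delicate point but did not carry out: without some such condition one cannot organize the fiberwise geometric realizations into the simplicial object $\DSpan(\cC)$ in the way the Morita construction requires. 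The missing verification is easy (it uses only that $\cC$ has pullbacks), but as written your argument does not establish Moritability.
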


\begin{proof}
	By \cref{example:Mapping-categories-DSpan}, the mapping category $\DSpan(\cC)(c,d)$ is of the form $\cC_{/c,d}$. It follows from \cite[Proposition 1.2.13.8]{HTT} that these categories admit geometric realizations and that these can be computed in $\cC$. Moreover, since the composition $\cC_{/c,d} \times \cC_{/d,e} \to \cC_{/c,e}$ is given by taking pullbacks, our assumptions ensure that the composition maps of $\DSpan(\cC)$ preserve geometric realizations in both variables separately. Finally, the source-target projection $\DSpan(\cC)_1 \to \cC \times \cC$ is a bicartesian fibration, where the cocartesian lifts are arrows in $\DSpan(\cC)_1$ of the form
	\[\begin{tikzcd}
		c & a & d \\
		c' & a' & d'.
		\arrow[from=1-2, to=1-1]
		\arrow["\alpha", "\sim" rot90, from=1-2, to=2-2]
		\arrow[from=1-2, to=1-3]
		\arrow[from=1-1, to=2-1]
		\arrow[from=2-2, to=2-1]
		\arrow[from=2-2, to=2-3]
		\arrow[from=1-3, to=2-3]
	\end{tikzcd}\]
	with $\alpha$ invertible, and the cartesian lifts are the arrows of the form
	\[\begin{tikzcd}
		c & {c \times_{c'} a' \times_{d'} d} & d \\
		c' & a' & d'.
		\arrow[from=1-2, to=1-1]
		\arrow[from=1-2, to=2-2]
		\arrow[from=1-2, to=1-3]
		\arrow[from=1-1, to=2-1]
		\arrow[from=2-2, to=2-1]
		\arrow[from=2-2, to=2-3]
		\arrow[from=1-3, to=2-3]
	\end{tikzcd}\]
\end{proof}

\begin{definition}[The double category of correspondences]\label{definition:double-cat-of-profunctors}
	Let $\cC$ be a category satisfying the assumptions of \cref{proposition:DSpan-is-Moritable}. Then the double category $\Mor(\DSpan(\cC))$ will be denoted $\DProfSeg(\cC)$ and called the \emph{double category of Segal objects and correspondences in $\cC$}. Observe that
	\[\DProfSeg(\cC)_0 \simeq \Lax([0]^\hor,\DSpan(\cC)) \simeq \Seg(\cC)\]
	by \cref{theorem:Morita-category-existence-and-universal-property,theorem:universal-property-span-double-cat}. In particular, we can define the full double subcategory $\DProf(\cC)$ of $\DProfSeg(\cC)$ spanned by the complete Segal objects, which will be called the \emph{double category of internal categories and correspondences in $\cC$}.
\end{definition}

Note that there are other possible definitions of the double category of correspondences, see for example \cite{AyalaFrancis2020FibrationsInftyCategories}. In \cref{section:comparison} below, we will compare our definition with the more classical ones.

\section{The second straightening result}\label{section:second-straightening}

\Cref{theoremA} now readily follows by combining the previous results. Let us write $\DProfSeg$ and $\DProf$ for the double categories $\DProfSeg(\Spc)$ and $\DProf(\Spc)$, respectively.

\begin{theorem}\label{theorem:2nd-straightening-result-noncomplete}
	Let $\cC$ be a Segal space. Then there is an equivalence of categories
	\[\Seg(\Spc)_{/\cC} \simeq \UnitLax(\cC^\hor,\DProfSeg)\]
	that is natural in $\cC$.
\end{theorem}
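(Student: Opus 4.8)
The plan is to obtain this equivalence by composing the first straightening result (\cref{theorem:first-straightening}) with the universal property of the Morita double category (\cref{theorem:Morita-category-existence-and-universal-property}), using that $\DProfSeg = \Mor(\DSpan(\Spc))$ by \cref{definition:double-cat-of-profunctors}. Since all the substantive work has been carried out in those results, what remains is essentially bookkeeping; the only point demanding care is the compatibility of the two naturality statements.

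First I would check that the Morita double category $\DProfSeg$ actually exists, i.e.\ that $\DSpan(\Spc)$ is Moritable. This is an immediate instance of \cref{proposition:DSpan-is-Moritable} applied to $\cC = \Spc$: the category $\Spc$ admits pullbacks and geometric realizations, and pullback along any map of spaces preserves all colimits---in particular geometric realizations---since $\Spc$ is an $\infty$-topos and colimits in it are universal. Hence $\DSpan(\Spc)$ is Moritable and $\Mor(\DSpan(\Spc)) = \DProfSeg$ is defined.

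Next I would assemble the chain of equivalences. Starting from $\Seg(\Spc)_{/\cC}$, \cref{theorem:first-straightening} gives a natural equivalence
\[\Seg(\Spc)_{/\cC} \simeq \Lax(\cC^\hor,\DSpan(\Spc)).\]
Applying \cref{theorem:Morita-category-existence-and-universal-property} with $\DD = \DSpan(\Spc)$ and $\DE = \cC^\hor$ then yields
\[\Lax(\cC^\hor,\DSpan(\Spc)) \simeq \UnitLax(\cC^\hor,\Mor(\DSpan(\Spc))) = \UnitLax(\cC^\hor,\DProfSeg).\]
Composing these two equivalences produces the desired equivalence $\Seg(\Spc)_{/\cC} \simeq \UnitLax(\cC^\hor,\DProfSeg)$.

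Finally, the naturality in $\cC$---the point I expect to require the most attention---follows by combining the naturality of each ingredient. The equivalence of \cref{theorem:first-straightening} is natural in $\cC$ by its statement, where both sides are regarded as contravariant functors of $\cC$ (via pullback on the left and precomposition with $\cC^\hor$ on the right). The Morita equivalence of \cref{theorem:Morita-category-existence-and-universal-property} is natural in the source variable $\DE$; since the horizontal embedding $\cC \mapsto \cC^\hor$ is functorial, this makes the second equivalence natural in $\cC$ as well, compatibly with the first. A composite of natural equivalences being natural, the equivalence $\Seg(\Spc)_{/\cC} \simeq \UnitLax(\cC^\hor,\DProfSeg)$ is natural in $\cC$, which completes the proof.
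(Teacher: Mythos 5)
Your proposal is correct and matches the paper's proof, which simply combines \cref{theorem:first-straightening} with \cref{theorem:Morita-category-existence-and-universal-property} (with the Moritability of $\DSpan(\Spc)$ supplied by \cref{proposition:DSpan-is-Moritable}, exactly as you note). You have merely spelled out the composition and the naturality bookkeeping that the paper leaves implicit.
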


\begin{proof}
	Combine \cref{theorem:first-straightening,theorem:Morita-category-existence-and-universal-property}.
\end{proof}

Restricting to complete Segal spaces, we obtain our main result.

\begin{corollary}[{\cref{theoremA}}]\label{corollary:2nd-straightening-result-complete}
	Let $\cC$ be a category. Then there is a natural equivalence
	\[\Cat_{/\cC} \simeq \UnitLax(\cC^\hor,\DProf).\]
\end{corollary}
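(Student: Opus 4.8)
The plan is to obtain the corollary by restricting the natural equivalence $\Seg(\Spc)_{/\cC} \simeq \UnitLax(\cC^\hor, \DProfSeg)$ of \cref{theorem:2nd-straightening-result-noncomplete} to appropriate full subcategories on either side, using that a category is exactly a complete Segal space. First I would exhibit both sides as full subcategories. On the left, the equivalence $\Cat \simeq \CSeg(\Spc)$ realizes $\Cat_{/\cC}$ as the full subcategory of $\Seg(\Spc)_{/\cC}$ spanned by the objects $\cD \to \cC$ whose total space $\cD$ is complete; here I use that $\cC$ is complete, so that a morphism of complete Segal spaces over $\cC$ is the same datum as a morphism in $\Seg(\Spc)_{/\cC}$. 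On the right, $\DProf$ is by \cref{definition:double-cat-of-profunctors} the full double subcategory of $\DProfSeg$ on the complete Segal objects; since a unital lax functor factors through a full double subcategory precisely when it sends objects there, and the objects of the double category $\cC^\hor$ are exactly the points of $(\cC^\hor)_0 = \cC_0$, a unital lax functor $F \colon \cC^\hor \to \DProfSeg$ lands in $\DProf$ if and only if $F(c)$ is a complete Segal object for every object $c \in \cC_0$.

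Next I would identify these two conditions under the equivalence. Applying naturality of \cref{theorem:2nd-straightening-result-noncomplete} to a morphism $c \colon [0] \to \cC$ of Segal spaces yields a commuting square in which the left vertical map is pullback along $c$ and the right vertical map is restriction along $c^\hor \colon [0]^\hor \to \cC^\hor$. Under the identifications $\Seg(\Spc)_{/[0]} \simeq \Seg(\Spc)$ and $\UnitLax([0]^\hor, \DProfSeg) \simeq \DProfSeg_0 \simeq \Seg(\Spc)$ of \cref{definition:double-cat-of-profunctors}, the right vertical map computes the value $F(c)$ while the left computes the fiber $\cD_c \coloneqq \{c\} \times_\cC \cD$, so the square gives a natural equivalence $F(c) \simeq \cD_c$. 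Consequently $F$ lands in $\DProf$ if and only if every fiber $\cD_c$ is complete.

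It then remains to prove that, for $\cC$ complete, the total space $\cD$ is complete if and only if all of its fibers $\cD_c$ are; this is the only non-formal step and I expect it to be the main obstacle. I would argue via the locality characterization of completeness: a Segal space $W$ is complete if and only if restriction along a vertex $[0] \hookrightarrow E^1$ induces an equivalence $\Map(E^1, W) \xrightarrow{\sim} W_0$, where $E^1$ denotes the walking isomorphism. Since $\cC$ is complete we have $\Map(E^1, \cC) \simeq \cC_0$, so any map $E^1 \to \cD$ lies over the constant equivalence at some object $c \in \cC_0$; this identifies the fiber of $\Map(E^1, \cD) \to \cC_0$ over $c$ with $\Map(E^1, \cD_c)$ and the fiber of $\cD_0 \to \cC_0$ over $c$ with $(\cD_c)_0$. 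The comparison map $\Map(E^1, \cD) \to \cD_0$ is thus a map over $\cC_0$ which is fiberwise given by $\Map(E^1, \cD_c) \to (\cD_c)_0$, hence an equivalence if and only if each of these is, i.e.\ if and only if every $\cD_c$ is complete. It is precisely here that completeness of $\cC$ is essential: without it, an equivalence of $\cD$ could map to a non-identity equivalence of $\cC$ and would escape detection by the fibers over objects.

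Combining these steps, an object $\cD \to \cC$ of $\Seg(\Spc)_{/\cC}$ lies in the full subcategory $\Cat_{/\cC}$ if and only if its image lies in the full subcategory $\UnitLax(\cC^\hor, \DProf)$; hence the equivalence of \cref{theorem:2nd-straightening-result-noncomplete} restricts to an equivalence between these subcategories. This restricted equivalence is again natural in $\cC$, since both inclusions are natural: pullback preserves completeness of the total space (complete Segal spaces being closed under limits in $\Seg(\Spc)$), and precomposition preserves the property of factoring through $\DProf$. The fullness observations and the naturality bookkeeping are routine given the universal properties already in hand, so the only real content lies in the fiberwise completeness statement of the third step.
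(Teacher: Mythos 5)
Your proposal is correct and follows essentially the same route as the paper: restrict the equivalence of \cref{theorem:2nd-straightening-result-noncomplete} and reduce everything to the claim that, over a complete base, a Segal space $\cD \to \cC$ is complete if and only if all its fibers $\cD_c$ are --- which the paper states and then leaves as an exercise. Your argument via the walking isomorphism $E^1$ (using completeness of $\cC$ to identify $\Map(E^1,\cC)\simeq \cC_0$ and then checking the comparison map fiberwise over $\cC_0$) is a correct solution to that exercise.
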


\begin{proof}
	It suffices to show that if $\cC$ is a complete Segal space and $p \colon \cD \to \cC$ a map of Segal spaces, then $\cD$ is complete if and only if for any $c \in \cC_0$, the fiber $p^{-1}(c)$ is a complete Segal space. This is an exercise for the reader.
\end{proof}

This result also implies the following general method for writing down unital lax functors into $\DProf$, which we state because it might be of independent interest. Let us write $\Cat_{/[\bullet]}$ for the functor $[n] \mapsto \Cat_{/[n]}$, where the functoriality is given by pullback.

\begin{proposition}\label{proposition:Stronger-universal-property-DCorr}
	Let $\DD$ be a double category. Then there is a natural equivalence
	\[\UnitLax(\DD,\DProf) \simeq \Fun(\DD,\Cat_{/[\bullet]}).\]
\end{proposition}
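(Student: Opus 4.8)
The plan is to compare both sides by writing $\DD$ as its canonical colimit of tensored representables and reducing everything to the case $\DD=[n]^\hor$, which is exactly \cref{corollary:2nd-straightening-result-complete}. Throughout I use the enriched co-Yoneda presentation
\[\DD \;\simeq\; \int^{[n]\in\Delta} \DD_n^\ver \times [n]^\hor\]
as a colimit in $\Fun(\Delta^\op,\Cat)$, where $[n]^\hor$ is the horizontal embedding of the representable $\Delta([-],[n])$ and $\DD_n^\ver\times[n]^\hor$ has $k$-simplices $\DD_n\times\Delta([k],[n])$. By the Remark following \cref{definition:lax-functors}, $\Fun(\DD,\Cat_{/[\bullet]})$ is the full subcategory of $\Fun_{/\Delta^\op}(\int\DD,\int\Cat_{/[\bullet]})$ on the functors preserving all cocartesian lifts; since these correspond to natural transformations of the straightenings, $\Fun(\DD,\Cat_{/[\bullet]})\simeq\mathrm{Nat}(\DD,\Cat_{/[\bullet]})$.

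First I would dispose of the right-hand side, which is purely formal. As a category of natural transformations, $\mathrm{Nat}(-,\Cat_{/[\bullet]})$ sends the co-Yoneda colimit to the corresponding end, and by the tensor--hom adjunction together with the enriched Yoneda lemma $\mathrm{Nat}([n]^\hor,\Cat_{/[\bullet]})\simeq\Cat_{/[n]}$ this gives
\[\Fun(\DD,\Cat_{/[\bullet]}) \;\simeq\; \int_{[n]\in\Delta}\Fun(\DD_n,\Cat_{/[n]}).\]
Applying \cref{corollary:2nd-straightening-result-complete} in the form $\Cat_{/[n]}\simeq\UnitLax([n]^\hor,\DProf)$, naturally in $[n]\in\Delta$, rewrites the right-hand side as $\int_{[n]}\Fun(\DD_n,\UnitLax([n]^\hor,\DProf))$. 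Next I would establish an exponential law identifying $\Fun(A,\UnitLax(\DE,\DProf))$ with $\UnitLax(A^\ver\times\DE,\DProf)$ for any category $A$ and any $\DE$. This is essentially formal from \cref{definition:lax-functors}: the product unstraightening gives $\int(A^\ver\times\DE)\simeq A\times\int\DE$ over $\Delta^\op$, with the cocartesian lift of a map being $\mathrm{id}_A$ times a cocartesian lift in $\int\DE$, and currying $\Fun_{/\Delta^\op}(A\times\int\DE,\int\DProf)\simeq\Fun(A,\Fun_{/\Delta^\op}(\int\DE,\int\DProf))$ matches the inert- and idle-preservation conditions on the two sides. Taking $A=\DD_n$ and $\DE=[n]^\hor$ turns the right-hand side into $\int_{[n]}\UnitLax(\DD_n^\ver\times[n]^\hor,\DProf)$.

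It then remains to prove the co-Yoneda--preservation statement
\[\UnitLax(\DD,\DProf) \;\simeq\; \int_{[n]\in\Delta}\UnitLax(\DD_n^\ver\times[n]^\hor,\DProf),\]
and this is where the main difficulty lies. One cannot appeal to a blanket continuity property of $\UnitLax(-,\DProf)$: this functor does \emph{not} carry arbitrary colimits of double categories to limits — indeed $\Cat_{/[\bullet]}$ fails the Segal condition, which is precisely a reflection of the laxness — so the point is that it carries this particular \emph{levelwise} colimit to the corresponding end. I would argue this at the level of unstraightenings, using the Grothendieck-construction-as-coend identity
\[\int\DD \;\simeq\; \int^{[n]\in\Delta}\DD_n\times\all{n},\]
a colimit in $\Cat_{/\Delta^\op}$ obtained by applying the total-space functor to the co-Yoneda colimit, where $\int[n]^\hor\simeq\all{n}$ as in the proof of \cref{theorem:first-straightening}. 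Since $\UnitLax(\DD,\DProf)$ is the full subcategory of $\Fun_{/\Delta^\op}(\int\DD,\int\DProf)$ on those functors that preserve cocartesian lifts of inert and of idle maps and that send every object over $[0]$ into the complete objects, the content is that each of these three conditions is detected on the pieces $\DD_n\times\all{n}$ of the colimit: every cocartesian inert (resp.\ idle) morphism of $\int\DD$ lying over an $n$-simplex, and every object over $[0]$, already appears in such a piece. Checking this compatibility carefully — so that the end of the full subcategories is again the correct full subcategory — is the crux of the argument. Chaining the four displayed equivalences then yields the claim, and naturality in $\DD$ is immediate since every step is natural in $\DD$.
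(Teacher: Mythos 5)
Your proof is correct and, at its core, runs parallel to the paper's: both rest on \cref{corollary:2nd-straightening-result-complete} applied to $[n]$ together with the fact that the simplicial category $[n]\mapsto \Fun^{\idle}_{/\Delta^\op}(\Delta^\op_{/[n]},\un\DProf)$ corepresents $\UnitLax(-,\DProf)$ on all of $\Fun(\Delta^\op,\Cat)$. The difference lies in how that second ingredient is obtained. The paper cites \cref{proposition:Partially-cofree-fibrations} from the appendix, which packages it once and for all as the universal property of the ``partially cofree fibration'' $R_\idle(\un\DProf)$, whereas you reprove exactly that universal property by hand via the co-Yoneda decomposition $\un\DD\simeq\int^{[n]}\DD_n\times\Delta^\op_{/[n]}$ in $\Cat_{/\Delta^\op}$ and the check that the inert/idle-preservation conditions are detected on the pieces. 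Your flagged ``crux'' is thus precisely the content of that appendix proposition, and your sketch of it is sound: the structure maps $\DD_n\times\Delta^\op_{/[n]}\to\un\DD$ preserve cocartesian morphisms, and conversely every cocartesian lift of an inert or idle map with source $d$ over $[n]$ is, up to equivalence, the image of $(\id_d,\alpha)$ from the $n$-th piece. What your version buys is self-containedness; what it costs is that the continuity is only verified for this one colimit rather than established as an adjunction. Two small imprecisions, neither fatal: the requirement that objects over $[0]$ land in complete objects is not an extra condition on functors into $\un\DProf$, since $\DProf$ is by definition the full double subcategory of $\DProfSeg$ spanned by complete Segal objects; and your aside that $\UnitLax(-,\DProf)$ fails to carry colimits to limits is accurate only for colimits computed in $\DblCat$ --- on $\Fun(\Delta^\op,\Cat)$ it is corepresented by $\Cat_{/[\bullet]}$ (that is the proposition being proved), and the failure of the Segal condition for $\Cat_{/[\bullet]}$ obstructs representability by a double category, not continuity.
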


\begin{remark}
	Here $\Fun(\DD,\Cat_{/[\bullet]})$ denotes the mapping category in $\Fun(\Delta^\op, \Cat)$. This may be defined as the full subcategory of $\Fun_{/\Delta^\op}(\un \DD, \un \Cat_{/[\bullet]})$ spanned by those functors that preserve all cocartesian lifts.
\end{remark}

\begin{proof}[Proof of \cref{proposition:Stronger-universal-property-DCorr}]
	By \cref{corollary:2nd-straightening-result-complete}, there are natural equivalences
	\[\Cat_{/[\bullet]} \simeq \UnitLax([\bullet]^\hor, \DCorr) \simeq \Fun^{\idle}_{/\Delta^\op}(\Delta_{/[\bullet]}^\op, \un \DCorr).\]
	By \cref{proposition:Partially-cofree-fibrations}, this simplicial object is equivalent to $R_\mathrm{idle}(\un \DCorr)$, for which there is a natural equivalence
	\[\Fun(\DD,R_\mathrm{idle}(\un \DCorr)) \simeq \UnitLax(\DD,\DCorr). \qedhere\]
\end{proof}

\begin{remark}
	In fact, it follows from \cref{proposition:Partially-cofree-fibrations} that we don't need to assume that $\DD$ is a double category in \cref{proposition:Stronger-universal-property-DCorr}: the result holds for any object $\DD$ in $\Fun(\Delta^\op,\Cat)$.
\end{remark}

\section{Comparison to other double categories of correspondences}\label{section:comparison}

Throughout the literature, alternative definitions of the double category $\DProf$ have been proposed. In this section, we will show that our definition is equivalent to those of Ayala--Francis \cite{AyalaFrancis2020FibrationsInftyCategories}, Ruit \cite{Ruit2023FormalCategoryTheory} and Heine \cite{Heine2024LocalglobalPrincipleParametrized}.

We start by recalling Ruit's construction. For any category $\cE$ satisfying the assumptions of \cref{proposition:DSpan-is-Moritable}, Ruit \cite[\S 4]{Ruit2023FormalCategoryTheory} defines a double category $\DSeg(\cE)$ of \emph{categorical objects in $\cE$}. The category $\DSeg(\cE)_n$ is defined as the full subcategory of $\Fun(\alln,\cE)$ spanned by those objects that satisfy the Segal condition (in the sense of \cref{definition:generalized-Segal-objects}) and a condition similar to \cref{definition:composite-objects}. Since $\DSeg(\cE)_0 = \Seg(\cE)$, one can restrict to the full double subcategory spanned by the complete Segal objects; this double category is denoted $\DCatComp(\cE)$ in \cite[\S 4]{Ruit2023FormalCategoryTheory}. 
In his construction, Ruit considers profunctors instead of correspondences as horizontal morphisms; by definition, a \emph{profunctor} from $\cC$ to $\cD$ is a correspondence from $\cD$ to $\cC$. In particular, to compare his construction to ours, we will need to consider the \emph{horizontal opposite} $\DSeg(\cE)^\hop$ in the sense of \cref{example:Horizontal-opposite}.

\begin{proposition}\label{proposition:Equivalence-DCorr-Ruit}
	Let $\cC$ be a category satisfying the assumptions of \cref{proposition:DSpan-is-Moritable}. Then there is a natural equivalence between the double categories $\DProfSeg(\cC)$ of \cref{definition:double-cat-of-profunctors} and $\DSeg(\cC)^\hop$ of \cite[Construction 4.12]{Ruit2023FormalCategoryTheory}. This equivalence restricts to an equivalence between $\DProf(\cC)$ and $\DCatComp(\cC)^\hop$.
\end{proposition}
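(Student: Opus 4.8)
The plan is to compare the two double categories level by level, relying on the explicit description of the Morita construction from \cref{appendix:Universal-property-Morita} together with Ruit's definition. The starting point is that the two agree on objects: by \cref{definition:double-cat-of-profunctors} we have $\DProfSeg(\cC)_0 \simeq \Seg(\cC)$, while Ruit's construction also has $\DCat(\cC)_0 = \Seg(\cC)$, and since passing to the horizontal opposite leaves the category of objects unchanged, the two double categories coincide at level $0$. The goal is to promote this to an equivalence of the underlying simplicial objects $\Delta^\op \to \Cat$, and then to restrict it to the full double subcategories on the complete Segal objects.

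First I would unwind the Morita construction to identify $\DProfSeg(\cC)_n = \Mor(\DSpan(\cC))_n$ with the full subcategory of $\Fun(\alln,\cC)$ spanned by those functors that satisfy the Segal condition of \cref{definition:generalized-Segal-objects} together with the composite-object condition of \cref{definition:composite-objects}. By Ruit's definition, $\DCat(\cC)_n$ is cut out of the \emph{same} functor category $\Fun(\alln,\cC)$ by the Segal condition and a condition "similar to" \cref{definition:composite-objects}. Thus the first genuine step is to check that these two conditions single out the same full subcategory; intuitively both express that the relevant relative tensor products (bar constructions) exist and are computed by geometric realizations, which is exactly what \cref{proposition:DSpan-is-Moritable} guarantees for $\DSpan(\cC)$.

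The second ingredient is the bookkeeping of the horizontal opposite, which I expect to be the most delicate point. Ruit takes \emph{profunctors}—correspondences with source and target interchanged—as horizontal $1$-morphisms, so his simplicial structure is that of $\DProfSeg(\cC)$ precomposed with the order-reversing automorphism $(-)^\op$ of $\Delta$; this is precisely the passage to the horizontal opposite in the sense of \cref{example:Horizontal-opposite}. Concretely, the canonical isomorphism $[n] \cong [n]^\op$ induces an equivalence $\Fun(\alln,\cC) \simeq \Fun(\Delta^\op_{/[n]^\op},\cC)$ carrying the composite-object condition to its reversed counterpart, and the task is to verify that these equivalences are compatible with all simplicial structure maps, so that they assemble into an equivalence $\DProfSeg(\cC) \simeq \DCat(\cC)^\hop$ of functors $\Delta^\op \to \Cat$. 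Naturality in $\cC$ is then inherited from the functoriality of both constructions along pullback- and geometric-realization-preserving functors.

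To keep the coherence check manageable, I would reduce the verification to low simplicial degree: a double functor is an equivalence as soon as it induces equivalences on the categories of objects and of morphisms compatibly with the source–target projection, since the Segal condition exhibits every $\DD_n$ as the iterated fibre product $\DD_1 \times_{\DD_0} \cdots \times_{\DD_0} \DD_1$. At level $1$ this amounts to identifying $\DProfSeg(\cC)(\cX,\cY)$, the category of correspondences from $\cX$ to $\cY$, with $\DCat(\cC)^\hop(\cX,\cY) \simeq \DCat(\cC)(\cY,\cX)$, Ruit's category of profunctors from $\cY$ to $\cX$, and checking that horizontal composition agrees—both being given by pullback followed by geometric realization, as in \cref{example:Mapping-categories-DSpan}. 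The hard part throughout will be matching the author's composite-object condition with Ruit's "similar" one while correctly threading the order-reversal through the whole simplicial structure, which is what ultimately justifies the appearance of the horizontal opposite. Finally, the statement about complete Segal objects is immediate: $\DProf(\cC)$ and $\DCatComp(\cC)$ are by definition the full double subcategories on the complete objects of $\Seg(\cC) = (-)_0$, and since the equivalence is the identity on $\Seg(\cC)$ at level $0$, it restricts to the asserted equivalence $\DProf(\cC) \simeq \DCatComp(\cC)^\hop$.
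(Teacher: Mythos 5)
Your overall picture is right and close to the paper's: both level-$n$ categories are identified inside $\Fun(\alln,\cC)$ as Segal objects admitting composites, the horizontal opposite accounts for Ruit's profunctor convention, and the restriction to complete objects is formal since the equivalence is the identity on $\Seg(\cC)$ in degree $0$. However, your mechanism for turning the levelwise identifications into an equivalence of simplicial objects has a genuine gap. You propose to construct equivalences $\DProfSeg(\cC)_n \simeq \DCat(\cC)_n^\hop$ degree by degree, then ``verify compatibility with the simplicial structure maps'' and ``reduce the coherence check to low simplicial degree.'' The low-degree criterion you cite (a double functor is an equivalence once it is one on $\DD_0$ and $\DD_1$) only applies to an \emph{already constructed} map of simplicial objects; it does not let you build one. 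In the $\infty$-categorical setting a natural transformation of functors $\Delta^\op \to \Cat$ cannot be specified by levelwise data plus a finite compatibility check, and your proposal never supplies the coherent construction. The paper sidesteps this entirely: \cref{theorem:universal-property-span-double-cat} gives an equivalence $\Fun^\inert_{/\Delta^\op}(\alln,\DSpan(\cC)) \simeq \Seg_{\alln}(\cC)$ that is \emph{natural in $\cD = \alln$}, hence natural in $[n] \in \Delta^\op$ from the start, and the simplicial coherence comes along for free.

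A second, smaller divergence: you plan to match the composite-object condition of \cref{definition:composite-objects} against Ruit's ``similar'' condition by hand. The paper avoids any such comparison by observing (via \cref{proposition:Composite-objects-as-Kan-extensions} and Ruit's Proposition 5.3) that both $\DProfSeg(\cC)_n$ and $\DCat(\cC)_n^\hop$ are the essential image of the \emph{same} fully faithful left adjoint $i_!$ to restriction along $\idln \hookrightarrow \alln$, so the two full subcategories agree automatically. Your route is not wrong in principle, but it is where the real work would hide, and the Kan-extension characterization is the tool that makes both this step and the naturality in $n$ essentially formal. As written, your proof is a plausible outline but does not yet contain the argument that produces a map of double categories.
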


\begin{proof}
	Let $i$ denote the inclusion $\idln \hookrightarrow \alln$. By \cref{theorem:universal-property-span-double-cat}, we have equivalences
	\[\begin{tikzcd}
		\Fun^\inert_{/\Delta^\op}(\alln, \DSpan(\cC)) \ar[r,"\sim"] \ar[d,"i^*"] & \Seg_{\alln}(\cC) \ar[d,"i^*"]\\
		\Fun^\inert_{/\Delta^\op}(\idln, \DSpan(\cC)) \ar[r,"\sim"] & \Seg_{\idln}(\cC)
	\end{tikzcd}\]
	where the top equivalence is natural in $[n]$. By \cref{lem:composites-idle-char} and \cite[Proposition 4.4 \& Lemma 4.8]{Ruit2023FormalCategoryTheory}, the vertical functors $i^*$ have fully faithful left adjoints $i_!$, and the categories $\DProfSeg(\cC)_n = \Mor(\DSpan(\cC))_n$ and $\DSeg(\cC)_n^\hop$ are defined as the essential images of $i_!$. In particular, we obtain an equivalence $\DProfSeg(\cC)_n \simeq \DSeg(\cC)_n^\hop$ that is natural in $n$, which is exactly what we needed to show. The equivalence $\DProf(\cC) \simeq \DCatComp(\cC)^\hop$ follows by restricting to complete Segal objects.
\end{proof}

We now describe the construction of Ayala--Francis \cite{AyalaFrancis2020FibrationsInftyCategories}. Recall that classically, an ($\infty$-categorical) correspondence from $\cC$ to $\cD$ is defined as a functor $\cC^\op \times \cD \to \Spc$. The composite of two correspondences $F$ from $\cC$ to $\cD$ and $G$ from $\cD$ to $\cE$ is then defined as the coend
\begin{equation}\label{equation:composition-profunctors-coend}
	\medint\int^{d \in \cD} F(-,d) \times G(d,-) = \colim_{(d' \to d) \in \Twr(\cD)} F(-,d') \times G(d,-).
\end{equation}
A correspondence from $\cC$ to $\cD$ can equivalently be described as a functor $p \colon \cE \to [1]$ equipped with equivalences $\cC \simeq p^{-1}(0)$ and $\cD \simeq p^{-1}(1)$---given $p \colon \cE \to [1]$, we recover a correspondence $p^{-1}(0)^\op \times p^{-1}(1) \to \Spc$ via the formula $(c,d) \mapsto \Map_{\cE}(c,d)$. This viewpoint is used by Ayala--Francis \cite{AyalaFrancis2020FibrationsInftyCategories}: they define a Segal space $\Corr_\bullet$ by defining $\Corr_n$ to be the space $\CondFib_{[n]}^\simeq$ of Conduché fibrations over $[n]$. Since any map to $[1]$ is a Conduché fibration, we see that the (horizontal) morphisms of $\Corr_\bullet$ can indeed be viewed as correspondences. Moreover, it follows from item 4 of Lemma 2.2.8 of \cite{AyalaFrancis2020FibrationsInftyCategories} that composition in this Segal space is indeed given by the coend formula of \cref{equation:composition-profunctors-coend}. If one replaces the \emph{space} $\CondFib_{[n]}^\simeq$ by the \emph{category} $\CondFib_{[n]}$ in this construction, one obtains a double category that we will denote by $\DCond$. This gives an alternative construction of a double category of correspondences.

Ruit proves in \cite[Proposition 4.21]{Ruit2023FormalCategoryTheory} that $\DCond$ is equivalent to $\DCatComp(\Spc)^\hop$, so it follows from \cref{proposition:Equivalence-DCorr-Ruit} that $\DCond$ is equivalent to $\DCorr$. We give an alternative proof of the equivalence $\DCond \simeq \DCorr$ here since it is relatively short and conceptually enlightening.

\begin{proposition}\label{proposition:Equivalence-DCorr-AF}
	There is an equivalence $\DCond \simeq \DCorr$ of double categories.
\end{proposition}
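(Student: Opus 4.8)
The plan is to prove the equivalence $\DCond \simeq \DCorr$ by exhibiting both double categories as the Morita double category of the span double category $\DSpan(\Spc)$, or rather by matching $\DCond$ levelwise with $\DProf = \DProf(\Spc)$ and checking compatibility with the simplicial structure. Recall from \cref{definition:double-cat-of-profunctors} that $\DCorr = \DProf(\Spc)$ is the full double subcategory of $\DProfSeg(\Spc) = \Mor(\DSpan(\Spc))$ on the complete Segal objects, and from \cref{proposition:Stronger-universal-property-DCorr} together with \cref{corollary:2nd-straightening-result-complete} that $\DProf$ is characterized by the natural equivalence $\UnitLax(\DD,\DProf) \simeq \Fun(\DD,\Cat_{/[\bullet]})$. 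The cleanest route is therefore to show that $\DCond$ enjoys this same universal property, i.e.\ to produce a natural equivalence $\UnitLax(\DD,\DCond) \simeq \Fun(\DD,\Cat_{/[\bullet]})$, which by Yoneda forces $\DCond \simeq \DCorr$.

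First I would identify the underlying simplicial object of $\DCond$. By construction $\DCond_n = \CondFib_{[n]}$, the category of Conduché fibrations over $[n]$, which since \emph{every} functor to $[n]$ is a Conduché fibration (as noted in the introduction) is simply $\Cat_{/[n]}$. So as a simplicial object we already have a levelwise identification $\DCond_\bullet \simeq \Cat_{/[\bullet]}$, where the simplicial functoriality on the right is given by pullback along the maps $[m] \to [n]$ in $\Delta$. The content of the proposition is thus that this levelwise identification is compatible with the \emph{double-categorical} (Segal/composition) structure, and that under it the horizontal composition in $\DCond$---which by the cited Lemma 2.2.8(4) of \cite{AyalaFrancis2020FibrationsInftyCategories} is the coend formula \cref{equation:composition-profunctors-coend}---matches the relative-tensor-product composition built into $\Mor(\DSpan(\Spc))$.

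The key steps, in order, are: (1) verify that $\DCond$ is a double category, i.e.\ that $\Cat_{/[\bullet]}$ satisfies the Segal condition, which holds because $\Cat_{/[n]} \simeq \Cat_{/[1]} \times_{\Cat_{/[0]}} \cdots \times_{\Cat_{/[0]}} \Cat_{/[1]}$ by gluing fibrations over the spine of $[n]$; (2) compute the unstraightening $\un\DCond \to \Delta^\op$ and show its cocartesian lifts of inert and idle maps are precisely the pullback maps, so that unital lax functors $\DD \to \DCond$ unwind to the data classified by $\Fun(\DD,\Cat_{/[\bullet]})$; and (3) invoke \cref{proposition:Stronger-universal-property-DCorr} to conclude $\UnitLax(\DD,\DCond) \simeq \Fun(\DD,\Cat_{/[\bullet]}) \simeq \UnitLax(\DD,\DCorr)$ naturally in $\DD$, whence $\DCond \simeq \DCorr$. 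I would take $\DD$ to range over double categories (or, using the final remark, over all of $\Fun(\Delta^\op,\Cat)$) and apply the Yoneda lemma in $\DblCatulax$.

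The main obstacle will be step (2): pinning down that the double-categorical composition in $\DCond$---a priori defined via Conduché fibrations and the coend formula---agrees on the nose with the simplicial structure of $\Cat_{/[\bullet]}$ coming from pullback, and in particular that the Segal map $\DCond_2 \to \DCond_1 \times_{\DCond_0} \DCond_1$ and the face map $d_1 \colon \DCond_2 \to \DCond_1$ correspond to restriction-along-the-spine and restriction-along-$[0,2]\hookrightarrow[2]$ respectively. This is where the coend description \cref{equation:composition-profunctors-coend} must be reconciled with the relative tensor product implicit in $\Mor(\DSpan(\Spc))$; concretely, composing two correspondences by restricting a Conduché fibration over $[2]$ to $\{0,2\}$ computes exactly the coend $\int^{d}\Map(-,d)\times\Map(d,-)$, and this is the one computation worth spelling out. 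Once that compatibility is established, everything else is a formal consequence of the universal property of $\DProf$.
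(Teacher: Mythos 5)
There is a genuine error at the foundation of your argument: you assert that since every functor to $[n]$ is a Conduché fibration, $\DCond_n = \CondFib_{[n]}$ is simply $\Cat_{/[n]}$. This is only true for $n \leq 1$. The observation in the introduction is that every functor to $[1]$ is a Conduché fibration (which is why ``locally Conduché fibration'' is a vacuous notion), but for $n \geq 2$ the inclusion $\CondFib_{[n]} \subsetneq \Cat_{/[n]}$ is proper: a Conduché fibration over $[2]$ is precisely a functor for which the restriction to the spine recovers the whole thing via the coend formula \cref{equation:composition-profunctors-coend}, and most functors to $[2]$ do not have this property (e.g.\ $[2] \to [2]$ and the spine $[0,1] \cup_{\{1\}} [1,2] \to [2]$ have equivalent restrictions to the spine but are not equivalent over $[2]$). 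For the same reason your step (1) fails: the Segal map $\Cat_{/[n]} \to \Cat_{/[1]} \times_{\Cat} \cdots \times_{\Cat} \Cat_{/[1]}$ is \emph{not} an equivalence; it has a fully faithful left adjoint (gluing along the spine), and $\CondFib_{[n]}$ is by \cite[Corollary 2.2.13]{AyalaFrancis2020FibrationsInftyCategories} exactly the essential image of that left adjoint. So $\DCond_\bullet$ is a proper subobject of $\Cat_{/[\bullet]}$, not all of it, and the universal property you aim for in step (3) cannot hold in the form stated: if $\DCond_\bullet$ were $\Cat_{/[\bullet]}$ levelwise, then $\Fun(\DD,\Cat_{/[\bullet]})$ would classify \emph{strict} functors into it, whereas the content of \cref{proposition:Stronger-universal-property-DCorr} is that strict functors into the non-Segal object $\Cat_{/[\bullet]}$ correspond to \emph{unital lax} functors into the Segal subobject.

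That said, your overall strategy is essentially the paper's: the actual proof uses \cref{proposition:Stronger-universal-property-DCorr} to identify $\DCorr_\bullet$ with the subobject of $\Cat_{/[\bullet]}$ given in degree $n$ by the essential image of the left adjoint to the spine-restriction functor, computes that left adjoint explicitly as the gluing $\cC_{0,1} \times_{\cC_1} \cdots \times_{\cC_{n-1}} \cC_{n-1,n}$, and then invokes \cite[Corollary 2.2.13]{AyalaFrancis2020FibrationsInftyCategories} to recognize this essential image as $\CondFib_{[n]}$, which is the definition of $\DCond_n$. To repair your argument you would replace the false identification $\DCond_\bullet \simeq \Cat_{/[\bullet]}$ with this essential-image characterization; the coend compatibility you flag as the ``main obstacle'' is then exactly what the cited result of Ayala--Francis supplies.
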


\begin{proof}
	By \cref{definition:double-cat-of-profunctors} and \cref{remark:Composites-left-adjoint-to-lax-Segal-map}, $\DProfSeg$ is defined in degree $n$ as the full subcategory of $\LaxMor(\DSpan(\Spc))_n$ spanned by those objects that are in the essential image of the fully faithful left adjoint to
	\[\LaxMor(\DSpan(\Spc))_n \to \LaxMor(\DSpan(\Spc))_1 \times_{\LaxMor(\DSpan(\Spc))_0} \cdots \times_{\LaxMor(\DSpan(\Spc))_0} \LaxMor(\DSpan(\Spc))_1.\]
	Restricting to \emph{complete} Segal spaces and using the equivalence from \cref{proposition:Stronger-universal-property-DCorr}, we see that $\DCorr_\bullet$ is equivalent to the subobject of $\Cat_{/[\bullet]}$ which in degree $n$ is given by the essential image of the left adjoint to
	\[\Cat_{/[n]} \to \Cat_{/[1]} \times_{\Cat} \cdots \times_{\Cat} \Cat_{/[1]}.\]
	Since this left adjoint is given by
	\[\left(\begin{tikzcd}[column sep=tiny, row sep=scriptsize]
		{\cC_{0,1}} & {\cC_1} & \cdots & {\cC_{n-1}} & {\cC_{n-1,n}} \\
		{[0,1]} & {\{1\}} & \cdots & {\{n-1\}} & {[n-1,n]}
		\arrow[from=1-1, to=2-1]
		\arrow[from=1-2, to=1-1]
		\arrow[from=1-2, to=1-3]
		\arrow[from=1-2, to=2-2]
		\arrow[from=1-4, to=1-3]
		\arrow[from=1-4, to=1-5]
		\arrow[from=1-4, to=2-4]
		\arrow[from=1-5, to=2-5]
		\arrow[from=2-2, to=2-1]
		\arrow[from=2-2, to=2-3]
		\arrow[from=2-4, to=2-3]
		\arrow[from=2-4, to=2-5]
	\end{tikzcd}\right) \longmapsto \begin{tikzcd}[column sep=tiny, row sep=scriptsize]
	{\cC_{0,1} \sqcup_{\cC_{1}} \cdots \sqcup_{\cC_{n-1}} \cC_{n-1,n}} \\
	{[n]}
	\arrow[from=1-1, to=2-1]
	\end{tikzcd},\]
	it follows from \cite[Corollary 2.2.13]{AyalaFrancis2020FibrationsInftyCategories} that its essential image is given by $\CondFib_{[n]}$. But this subobject of $\Cat_{/[\bullet]}$ is precisely the definition of $\DCond$.
\end{proof}

\begin{remark}\label{remark:Nonlax-universal-property-DCorr}
	The proof above shows that the equivalence $\UnitLax(\DD,\DProf) \simeq \Fun(\DD,\Cat_{/[\bullet]})$ from \cref{proposition:Stronger-universal-property-DCorr} restricts to an equivalence
	\[\Fun(\DD,\DProf) \simeq \Fun(\DD,\CondFib_{[\bullet]}).\]
\end{remark}

We also immediately obtain a comparison with the double category of correspondences defined by Heine \cite{Heine2024LocalglobalPrincipleParametrized}.

\begin{corollary}
	Let $\mathrm{CORR}$ denote the double category of correspondences defined by Heine \cite[Notation 7.9.(3)]{Heine2024LocalglobalPrincipleParametrized}. Then there is an equivalence $\DCorr \simeq \mathrm{CORR}$ of double categories.
\end{corollary}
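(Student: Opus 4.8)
The plan is to reduce the statement to the comparison with the Ayala--Francis model already carried out in \cref{proposition:Equivalence-DCorr-AF}. Concretely, I would first identify Heine's double category $\mathrm{CORR}$ with the Conduché model $\DCond$, and then invoke the equivalence $\DCond \simeq \DCorr$ of \cref{proposition:Equivalence-DCorr-AF} to conclude. This is precisely why the result can be stated as a corollary: once $\mathrm{CORR} \simeq \DCond$ is established, no work beyond composing equivalences remains.

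To carry out the identification, I would unwind Heine's definition in \cite[Notation 7.9.(3)]{Heine2024LocalglobalPrincipleParametrized}. Although his construction proceeds via fiberwise free cocompletions and locally cocartesian fibrations, the resulting simplicial object should admit a purely degreewise description: the category $\mathrm{CORR}_n$ of $n$-simplices ought to be equivalent to the category $\CondFib_{[n]}$ of Conduché (exponentiable) fibrations over $[n]$, naturally in $[n] \in \Delta^\op$. I would verify this degree by degree, paying particular attention to $\mathrm{CORR}_0 \simeq \Cat$ and to $\mathrm{CORR}_1 \simeq \Cat_{/[1]}$, under which a horizontal morphism is a correspondence presented as a functor to $[1]$.

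The main work---and the likely obstacle---lies in matching the horizontal composition, that is, the face map $d_1 \colon \mathrm{CORR}_2 \to \mathrm{CORR}_1$. In $\DCond$ this map is restriction of a Conduché fibration along the outer inclusion $\{0 < 2\} \hookrightarrow [2]$, which by item 4 of \cite[Lemma 2.2.8]{AyalaFrancis2020FibrationsInftyCategories} realizes the coend formula of \cref{equation:composition-profunctors-coend}. Heine's composition, by contrast, is defined through his free-cocompletion machinery, so I would check that the latter computes the same coend---equivalently, the relative tensor product of correspondences over the intermediate category. Once the full simplicial structure is matched, one obtains $\mathrm{CORR} \simeq \DCond$, and composing with \cref{proposition:Equivalence-DCorr-AF} yields the desired equivalence $\DCorr \simeq \mathrm{CORR}$.
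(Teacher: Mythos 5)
Your overall strategy---identify Heine's $\mathrm{CORR}$ with the Conduch\'e model $\DCond$ and then appeal to \cref{proposition:Equivalence-DCorr-AF}---lands in the right place, but it takes a genuinely different and considerably more laborious route than the paper, and as written the essential step is only promised, not carried out. The paper never unwinds Heine's Notation 7.9.(3) at all: it quotes Heine's own comparison result \cite[Corollary 7.26]{Heine2024LocalglobalPrincipleParametrized}, a natural equivalence $\Fun(\cC^\hor,\mathrm{CORR}) \simeq \CondFib_{\cC}$, specializes to $\cC = [n]$ where $\Fun([n]^\hor,\mathrm{CORR}) \simeq \mathrm{CORR}_n$, and combines this with the identification $\DProf_n \simeq \CondFib_{[n]}$ coming from \cref{proposition:Equivalence-DCorr-AF} to obtain $\DProf_n \simeq \mathrm{CORR}_n$ naturally in $n$. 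All compatibility with the simplicial structure maps---in particular with horizontal composition---is thereby outsourced to Heine's Corollary 7.26, which is exactly why the statement can be a two-line corollary.

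By contrast, the step you yourself flag as ``the main work,'' namely checking that Heine's free-cocompletion composition realizes the coend of \cref{equation:composition-profunctors-coend}, is precisely the content your route must supply from scratch, and your proposal only says ``I would check'' it. Without that verification the degreewise equivalences $\mathrm{CORR}_n \simeq \CondFib_{[n]}$ do not assemble into an equivalence of simplicial objects in $\Cat$, so the argument is incomplete as it stands. If you want to keep your route you must actually perform that computation inside Heine's machinery; otherwise, note that the whole verification is subsumed by citing \cite[Corollary 7.26]{Heine2024LocalglobalPrincipleParametrized} together with the corepresentability of the $n$-th level by $[n]^\hor$, which is the paper's (much shorter) argument.
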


\begin{proof}
	Heine proves a natural equivalence
	\[\Fun(\cC^\hor,\mathrm{CORR}) \simeq \CondFib_{\cC}\]
	in \cite[Corollary 7.26]{Heine2024LocalglobalPrincipleParametrized}. Using \cref{proposition:Equivalence-DCorr-AF}, we see that
	\[\DProf_n \simeq \CondFib_{[n]} \simeq \Fun([n]^\hor,\mathrm{CORR}) \simeq \mathrm{CORR}_n\]
	naturally in $n$.		
\end{proof}

\section{Epilogue: Relation to other straightening results}\label{section:other-types-of-fibrations}

Given our main result \cref{theoremA} (cf.\ \cref{corollary:2nd-straightening-result-complete}), it is natural to ask how this straightening equivalence relates to various other known straightening equivalences. More generally, one may wonder whether Lurie's straightening-unstraightening for (locally) (co)cartesian fibrations and Ayala--Francis's straightening equivalence for Conduché fibrations follow from \cref{corollary:2nd-straightening-result-complete}. With some work, this indeed turns out to be the case. We will illustrate this in the case of Conduché fibrations and locally cocartesian fibrations, but similar arguments work for other types of fibrations too.

The first step is to unwind what the unital lax functor $P \colon \cC^\hor \to \DCorr$ corresponding to a functor $p \colon \cD \to \cC$ looks like. To this end, for a given $c$ in $\cC$ we will write $\cD_c$ for the fiber $\cD \times_{\cC} \{c\}$. Moreover, given $d,d'$ in $\cD$ and a map $f \colon p(d) \to p(d')$ in $\cC$ we will write $\Map_{/f}(d,d')$ for $\Map(d,d') \times_{\Map(p(d),p(d'))} \{f\}$. One obtains the following description:

\begin{enumerate}[(i)]
	\item\label{item1:unpacking} On objects, $P$ is given by $c \mapsto \cD_c$.
	\item\label{item2:unpacking} On horizontal arrows, $P$ sends $f \colon c \to d$ to the correspondence \[P(f) \colon \cD_c^\op \times \cD_d \to \Spc; \quad (x,y) \mapsto \Map_{/f}(x,y).\]
	\item\label{item3:unpacking} For any composable $c \xrightarrow{f} d \xrightarrow{g} e$, the lax comparison map $P(g)P(f) \to P(gf)$ is given by the natural map
	\begin{equation}\label{equation:lax-comparison-map-P}
		\int^{y \in \cD_d}\Map_{/f}(-,y) \times \Map_{/g}(y,-) \to \Map_{/gf}(-,-)
	\end{equation}
	induced by the composition maps
	\[\Map_{/f}(x,y) \times \Map_{/g}(y,z) \to \Map_{/gf}(x,z).\]
\end{enumerate}

The map \cref{equation:lax-comparison-map-P} is explicitly constructed in \cite[Lemma 2.2.7]{AyalaFrancis2020FibrationsInftyCategories}. Moreover, it is shown \cite[Lemma 2.2.8]{AyalaFrancis2020FibrationsInftyCategories} that $p \colon \cD \to \cC$ is a Conduché fibration precisely if for any composable $c \xrightarrow{f} d \xrightarrow{g} e$ in $\cC$, this lax comparison map \cref{equation:lax-comparison-map-P} is an equivalence. The equivalence $\Cat_{/\cC} \simeq \UnitLax(\cC^\hor,\DCorr)$ from \cref{corollary:2nd-straightening-result-complete} therefore restricts to the following equivalence:

\begin{theorem}[{\cite[Theorem 2.3.10]{AyalaFrancis2020FibrationsInftyCategories}}]\label{theorem:Ayala-Francis-straightening}
	There is a natural equivalence
	\begin{equation}\label{equation:Ayala-Francis-straightening}
		\CondFib_{\cC} \simeq \Fun(\cC^\hor,\DCorr)
	\end{equation}
	of categories, where $\CondFib_{\cC}$ denotes the category of Conduché fibrations over $\cC$. \qed
\end{theorem}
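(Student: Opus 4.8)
The plan is to obtain \cref{theorem:Ayala-Francis-straightening} as a restriction of the natural equivalence $\Cat_{/\cC} \simeq \UnitLax(\cC^\hor,\DCorr)$ of \cref{corollary:2nd-straightening-result-complete}. Under that equivalence a functor $p \colon \cD \to \cC$ corresponds to the unital lax functor $P$ described in items (i)--(iii) above. It therefore suffices to identify on each side the full subcategory cut out by the relevant condition---honesty of the functor on one side, the Conduché property on the other---and to check that the equivalence of \cref{corollary:2nd-straightening-result-complete} matches these two full subcategories.

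First I would characterize when a unital lax functor lies in the full subcategory $\Fun(\cC^\hor,\DCorr) \subset \UnitLax(\cC^\hor,\DCorr)$. By \cref{definition:lax-functors} the distinction is that an honest functor preserves cocartesian lifts of \emph{all} maps in $\Delta^\op$, whereas a unital lax functor preserves only those of inert and idle maps. Using the inert--active factorization in $\Delta$ together with the Segal conditions on $\cC^\hor$ and $\DCorr$, I would reduce ``preserves all cocartesian lifts'' to a single generating case, namely the comparison map attached to the active map $\delta^1 \colon [1] \to [2]$. Since the degeneracies are idle and hence already handled by the unital hypothesis, and since the comparison maps for the remaining inner active maps decompose, through the Segal maps, into iterated binary comparisons, $P$ is honest precisely when this binary comparison is an equivalence at every object lying over $[2]$. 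For $\cC^\hor$ these objects are the composable pairs $c \xrightarrow{f} d \xrightarrow{g} e$, and the associated comparison map is exactly $P(g)P(f) \to P(gf)$.

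By the unpacking recorded in item (iii), this comparison map is identified with the coend map \cref{equation:lax-comparison-map-P}. By \cite[Lemma 2.2.8]{AyalaFrancis2020FibrationsInftyCategories}, that map is an equivalence for every composable pair if and only if $p$ is a Conduché fibration. Consequently, under the equivalence of \cref{corollary:2nd-straightening-result-complete}, the object $p$ lands in $\Fun(\cC^\hor,\DCorr)$ if and only if $p$ lies in $\CondFib_{\cC}$; hence the equivalence restricts to the asserted equivalence $\CondFib_{\cC} \simeq \Fun(\cC^\hor,\DCorr)$, and naturality in $\cC$ is inherited from that of \cref{corollary:2nd-straightening-result-complete}.

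The main obstacle is the reduction in the second step: rigorously matching ``preservation of all cocartesian lifts'' with ``the binary comparison maps are equivalences.'' This rests on the inert--active factorization in $\Delta$ and on the coherence of (unital) lax functors between double categories---precisely the kind of structural facts whose proofs are, as noted at the start of this section, not yet available in the literature. Once that reduction is granted, the remainder of the argument is a formal concatenation of the unpacking (i)--(iii) with \cite[Lemma 2.2.8]{AyalaFrancis2020FibrationsInftyCategories}.
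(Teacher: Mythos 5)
Your proposal is correct and follows essentially the same route as the paper: restrict the equivalence of \cref{corollary:2nd-straightening-result-complete} using the unpacking (i)--(iii) and \cite[Lemma 2.2.8]{AyalaFrancis2020FibrationsInftyCategories} to match the Conduché condition with invertibility of the lax comparison maps, i.e.\ with preservation of all cocartesian lifts. Your explicit reduction of ``preserves all cocartesian lifts'' to the single binary comparison over $[2]$ via the inert--active factorization is a detail the paper glosses over, and you correctly flag it as resting on coherence facts for lax functors that are not yet in the literature, consistent with the paper's own disclaimer that only the main ideas are given.
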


\begin{remark}
	Ayala--Francis work with the Segal space $\Corr$ instead of the double category $\DCorr$ and hence only obtain the equivalence \cref{equation:Ayala-Francis-straightening} at the level of groupoid cores. However, their proof strategy can easily be extended to obtain \cref{theorem:Ayala-Francis-straightening}.
\end{remark}

To recover Lurie's straightening equivalence for locally cocartesian fibrations, let us write $\Cat_{/\cC}^\mathrm{loccoc}$ for the full subcategory of $\Cat_{/\cC}$ spanned by the locally cocartesian fibrations.
Recall that $p \colon \cD \to \cC$ is a locally cocartesian fibration precisely if for every $f \colon c \to d$ in $\cC$ and $x$ in $\cD_c$, the functor $\Map_{/f}(x,-) \colon \cD_d \to \Spc$ is representable.
This condition can be entirely rephrased in terms of the double category $\DCorr$: it is equivalent to the horizontal morphism $P(f)$ being a \emph{companion} of a vertical morphism (see Definition 5.1 and Example 5.6 of \cite{Ruit2023FormalCategoryTheory}). Let us write $\DCorr^\mathrm{comp}$ for the locally full subcategory (in the sense of \cref{definition:locally-full-subcategory}) of $\DCorr$ spanned by the horizontal morphisms that are companions. It follows directly that $\Cat_{/\cC}^\mathrm{loccoc}$ is equivalent, under the equivalence of \cref{theoremA}, to $\UnitLax(\cC^\hor,\DCorr^\mathrm{comp})$.

To relate this to (oplax) functors into the 2-category $\Cat$, we will use the \emph{squares embedding} $\Sq \colon \Cat_2 \to \DblCat$, where $\Cat_2$ denotes the category of 2-categories (see \cref{example:2-categories}). For the definition of the squares embedding, see \cite[\S 10.4.2]{GaitsgoryRozenblyum2017StudyDerivedAlgebraica} or \cite[\S 3.6]{Ruit2024FunctorDoubleinftycategories}.

The squares embedding is related to the Gray tensor product in the following way. Given a category $\cC$, recall its horizontal and vertical embeddings $\cC^\hor$ and $\cC^\ver$ from \cref{example:Horizontal-embedding,example:Vertical-embedding}. It is shown in \cite[Remark 6.3]{Ruit2024FunctorDoubleinftycategories} that the Gray tensor product satisfies the following universal property: for any pair of categories\footnote{This universal property also holds when $\cC$ and $\cD$ are 2-categories, as was recently shown in \cite[Theorem B]{LoubatonRuit2025SquaresFunctorGaitsgoryRozenblyum}.} $\cC$ and $\cD$ and any 2-category $\cX$, there is a natural equivalence
\[\Map(\cC^\hor \times \cD^\ver, \Sq(\cX)) \simeq \Map(\cC \otimes \cD, \cX).\]
Using this universal property, it follows that for any 2-category $\cX$ there is a natural equivalence
\begin{equation}\label{equation:Oplax-natural-transformations-and-Sq}
	\UnitLax^\oplax(\cC,\cX) \simeq \UnitLax(\cC^\hor,\Sq(\cX))
\end{equation}
of categories. Here $\UnitLax^\oplax$ denotes the category of unital lax functors and oplax natural transformations between them.\footnote{To be precise, we define $\UnitLax^\oplax(\cC, \cX)$ as $\Fun^\oplax(\UnitEnv(\cC), \cX)$, where $\UnitEnv$ denotes the unital envelope of $\cC$ and $\Fun^\oplax$ is defined as in \cite[\nopp 10.3.2.7]{GaitsgoryRozenblyum2017StudyDerivedAlgebraica} or \cite[Definition 3.9]{Haugseng2021LaxTransformationsAdjunctions}.} One can then deduce the following:

\begin{theorem}[{\cite[\S 3]{Lurie2009InftyCategoriesGoodwillie}, \cite[Theorem B.57]{AyalaMazel-Geeea2024StratifiedNoncommutativeGeometry}}]
	There is a natural equivalence
	\[\Cat_{/\cC}^\mathrm{loccoc} \simeq \UnitOplax^\lax(\cC,\Cat)\]
	of categories, where $\Cat$ denotes the 2-category of categories. This equivalence restricts to an equivalence
	\[\LoccoCart(\cC) \simeq \UnitOplax(\cC,\Cat),\]
	where $\LoccoCart(\cC)$ is the category of locally cocartesian fibrations over $\cC$ and functors between them that preserve locally cocartesian morphisms.
\end{theorem}

\begin{proof}
	Let $\Ver(-)$ denote the vertical fragment functor defined in \cite[\S 3.3]{Ruit2024FunctorDoubleinftycategories}.
	It follows from \cref{proposition:Equivalence-DCorr-Ruit} and \cite[Definition 4.12 \& Proposition 4.19]{Ruit2023FormalCategoryTheory} that 
	$\Ver(\DCorr) \simeq \Ver(\DCorr^\mathrm{comp}) \simeq \Cat^\mathrm{co}$.
	Here $\Cat^\mathrm{co}$ denotes the 2-category of categories where the direction of the 2-morphisms is reversed.
	The double category $\DCorr^\mathrm{comp}$ is complete and accompanied in the sense of \cite[Definition 3.2.2]{Loubaton2025EffectivityGeneralizedDouble}, hence by Proposition 3.4.23 of op.\ cit.\ it is of the form $\Sq(\cX)$ for some 2-category $\cX$.
	It is shown in \cite[Proposition 3.58]{Ruit2024FunctorDoubleinftycategories} that $\Ver(\Sq(\cX)) \simeq \cX$, hence since $\Ver(\DCorr^\mathrm{comp}) \simeq \Cat^\mathrm{co}$, it follows that $\DCorr^\mathrm{comp} \simeq \Sq(\Cat^\mathrm{co})$.
	Combining this with \cref{equation:Oplax-natural-transformations-and-Sq}, it follows that
	\[\UnitLax(\cC^\hor,\DCorr^\mathrm{comp}) \simeq \UnitLax^\oplax(\cC,\Cat^\mathrm{co}) \simeq \UnitOplax^\lax(\cC,\Cat).\]
	By the discussion preceding this proof, the left-hand side is equivalent to the full subcategory $\Cat_{/\cC}^\mathrm{loccoc}$ of $\Cat_{/\cC}$ spanned by the locally cocartesian fibrations. Unwinding these equivalences, it follows that a map between locally cocartesian fibrations preserves locally cocartesian lifts precisely if the corresponding lax natural transformation in $\UnitOplax^\lax(\cC,\Cat)$ is a strict natural transformation.
	This completes the proof.
\end{proof}

\part{The universal property of the Morita double category}\label{PartII}

In this second part we prove \cref{theoremB:Morita}. This result states that under mild assumptions on a double category $\DD$ one can construct its Morita double category $\Mor(\DD)$, which satisfies the universal property that for any double category $\DE$, there is a natural equivalence
\begin{equation}\label{equation:Universal-property-Mor-D}
	\Lax(\DE,\DD) \simeq \UnitLax(\DE,\Mor(\DD)).
\end{equation}

Roughly, the idea is as follows.
We first construct a ``lax'' version $\LaxMor(\DD)$ of the Morita double category.
This is a simplicial object in $\Cat$, though generally not a double category.
However, it has the universal property $\Lax(\DE,\DD) \simeq \Fun(\DE,\LaxMor(\DD))$, and we will define $\Mor(\DD)$ as a subobject of $\LaxMor(\DD)$.
We then reduce the proof of the natural equivalence \cref{equation:Universal-property-Mor-D} to the case where $\DE$ is the horizontal embedding $[n]^\hor$ of the category $[n]$.
In this particular case, we establish the equivalence by showing that there are natural fully faithful embeddings
\[\Lax([n]^\hor,\DD) \hookrightarrow \UnitLax([n]^\hor,\operatorname{\LaxMor} \DD) \text{ and } \UnitLax([n]^\hor,\operatorname{\Mor} \DD ) \hookrightarrow \UnitLax([n]^\hor,\operatorname{\LaxMor} \DD )\]
whose images agree.
This is achieved in \cref{the crucial lemma} by computing certain ``vertical Kan extensions'', an extension of the theory of operadic Kan extensions to double categories.

\begin{remark*}
	While we don't explicitly use them here, our proof strategy is heavily influenced by the theory of \emph{enhanced Segal objects} developed in \cite{Abellan2023ComparingLaxFunctors}.
\end{remark*}

\begin{notation*}
	Throughout this part, the composition of horizontal morphisms in a double category $\DD$ will be denoted by $\otimes$ and written from left to right; that is, given $X$ in $\DD(x,y)$ and $Y$ in $\DD(y,z)$, we write $X \otimes Y$ for their horizontal composite.
\end{notation*}

\section{(Unital) envelopes of double categories}\label{sec:Envelopes}

Given a category $\cB$ and a wide subcategory $\cB_0$, we will write $\coCart_0(\cB)$ for the subcategory of $\Cat_{/\cB}$ whose objects are those functors $\cC \to \cB$ that have all cocartesian lifts of maps in $\cB_0$, and whose morphisms are functors over $\cB$ that preserve these cocartesian lifts. Given two objects $\cC$ and $\cD$ in $\coCart_0(\cB)$, we will write $\Fun_{/\cB}^{\cB_0}(\cC,\cD)$ for the full subcategory of $\Fun_{/\cB}(\cC,\cD)$ spanned by those functors $\cC \to \cD$ over $\cB$ that preserve cocartesian lifts of maps in $\cB_0$.

Recall that the (non-full) inclusion
\[\Fun(\Delta^\op,\Cat) \simeq \coCart(\Delta^\op) \hookrightarrow \coCart_\inert(\Delta^\op)\]
admits a (2-categorical) left adjoint, see for example \cite[Corollary 2.2.5]{BarkanHaugsengea2022EnvelopesAlgebraicPatterns}.
We will denote this left adjoint by $\Env \colon \coCart_\inert(\Delta^\op) \to \Fun(\Delta^\op,\Cat)$ and call it the \emph{envelope} construction.
Moreover, for a double category $\DD$, we will simply write $\Env \DD$ for $\Env (\un \DD)$.
It is shown in \cite[Corollary 2.2.5]{BarkanHaugsengea2022EnvelopesAlgebraicPatterns} that $\Env$ sends $\cC \to \Delta^\op$ to (the straightening of) the pullback $\cC \times_{\Delta^\op} \Ar_\activ(\Delta^\op)$, where $\Ar_\activ(\Delta^\op)$ denotes the full subcategory of $\Ar(\Delta^\op)$ spanned by the active morphisms.
In particular, given a double category $\DD$, we see that
\[\Env\DD \colon \Delta^\op \to \Cat; \quad (\Env\DD)_n = \un \DD \times_{\Delta^\op} (\Delta^\activ_{[n]/})^\op.\]
Observe that $(\Env\DD)_1$ agrees with the wide subcategory $\un \DD_\activ$ of $\un \DD$ spanned by the morphisms that lie over active maps in $\Delta^\op$.
It is not hard to see that $\Env\DD$ is again a double category, hence $\Env$ restricts to give a left adjoint to the inclusion $\DblCat \hookrightarrow \DblCatlax$ and there are natural equivalences
\[\Lax(\DD,\DE) \simeq \Fun(\Env(\DD),\DE).\]

One can easily verify that the inclusion $\DblCat \hookrightarrow \coCart_\idle(\Delta^\op)$ is accessible and preserves limits, hence it must also admit a left adjoint. Since this inclusion lands in the full subcategory $\DblCatulax$ of $\coCart_{\idle}(\Delta^\op)$, we obtain a restricted adjunction
\[\begin{tikzcd}
	{\UnitEnv : \DblCatulax} & \DblCat
	\arrow[""{name=0, anchor=center, inner sep=0}, shift left=1.5, from=1-1, to=1-2]
	\arrow[""{name=1, anchor=center, inner sep=0}, shift left=1.5, hook', from=1-2, to=1-1]
	\arrow["\dashv"{anchor=center, rotate=-90}, draw=none, from=0, to=1]
\end{tikzcd}\]
whose left adjoint we will denote by $\UnitEnv$ and call the \emph{unital lax envelope}. Observe that since the inclusion $\DblCat \hookrightarrow \DblCatulax$ preserves cotensors by objects of $\Cat$, we obtain natural equivalences
\[\UnitLax(\DD,\DE) \simeq \Fun(\UnitEnv \DD, \DE)\]
of categories.
Finding an explicit description of $\UnitEnv \DD$ seems difficult for a general double category $\DD$.
However, for the double categories $[n]^\hor$ such a description exists:

\begin{lemma}[{An explicit description of $\UnitEnv [n]^\hor$}]\label{lemma:Unital-lax-envelope}
	The double category $\UnitEnv [n]^\hor$ is equivalent to the locally full subcategory $\DE$ of $\Env [n]^\hor$ spanned by those horizontal morphisms of $\Env [n]^\hor$ which correspond to injective maps $[k] \rightarrowtail [n]$ under the equivalence $(\Env [n]^\hor)_1 \simeq (\alln)_\activ$.
\end{lemma}

\begin{remark}
	Unwinding the definition of $\DE$, we see that $\DE_0$ is the set $\{0,\ldots,n\}$.
	The mapping category $\DE(k,l)$ is empty if $l < k$ and otherwise it is the poset of subsets of $\{k,\ldots,l\}$ containing $k$ and $l$, ordered by reverse inclusion.
	Composition is given by union of subsets.
	This is a 2-category commonly known as the (2-truncated) $n$-oriental $\mathcal{O}_n$ \cite{Street1987AlgebraOrientedSimplexes}, and \cref{lemma:Unital-lax-envelope} verifies that $\mathcal{O}_n$ classifies unital lax functors out of $[n]$ as defined in \cref{definition:lax-functors}.
\end{remark}

\begin{proof}
	Let us write $\iota \colon \DE \hookrightarrow \Env [n]^\hor$ for the locally full subcategory described in this lemma.
	Observe that both $\DE$ and $\Env [n]^\hor$ are 2-categories in the sense of \cref{example:2-categories}.
	We first verify that $\iota$ admits a right adjoint $R \colon \Env [n]^\hor \to \DE$, in the sense that there exist a unit morphism $\id \to R \iota$ in $\Fun(\DE,\DE)$ and a counit morphism $\iota R \to \id$ in $\Fun(\Env [n]^\hor, \Env [n]^\hor)$ satisfying the triangle identities.\footnote{
	Here we use the notation $\Fun(-,-)$ as defined at the end of \cref{definition:lax-functors}.
	This is \textbf{not} the usual category of functors between 2-categories, since the 1-morphisms in $\Fun(-,-)$ are not natural transformations.
	Instead, they are higher-categorical analogues of the ``icons'' defined in \cite{Lack2010Icons}.}
	Observe that $\iota_0 \colon \DE_0 \to (\Env [n]^\hor)_0$ is a bijection of sets and that $\iota_1 \colon \DE_1 \to (\Env [n]^\hor)_1$ has a right adjoint $R_1$ given by factoring a map $[k] \to [n]$ through its image.
	Combining Proposition A.3.16 and Lemma A.3.13 of \cite{BlansBlom2024ChainRuleGoodwillie}, we obtain a functor $R \colon \Env [n]^\hor \to \DE$ right adjoint to $\iota$ in the sense described above.
	Moreover, since $\iota_1 \colon \DE_1 \to \Env [n]^\hor$ is fully faithful, the unit $\id \to R \iota$ is an equivalence.

	Applying $\Fun(-,\DD)$, one obtains an adjunction
	\[R^* :  \Fun(\DE,\DD) \rightleftarrows \Fun(\Env [n]^\hor, \DD) : \iota^*\]
	whose left adjoint $R^*$ is fully faithful.
	It is easily verified that for any $F \colon \Env [n]^\hor \to \DD$, the counit $R^* \iota^* F \to F$ is invertible if and only if, under the equivalence $\Fun(\Env [n]^\hor, \DD) \simeq \Lax([n]^\hor, \DD)$, the functor $F$ corresponds to a unital lax functor. This shows that there is a natural equivalence $\Fun(\DE,\DD) \simeq \UnitLax([n]^\hor,\DD)$ given by restricting along the inclusion $[n]^\hor \hookrightarrow \DE$.
\end{proof}

\section{Cofree cocartesian fibrations}

The inclusions of $\DblCat$ into $\DblCatulax$ and $\DblCatlax$ don't preserve all colimits, so they can't admit right adjoints. However, it turns out that the inclusions of $\coCart(\Delta^\op)$ into $\coCart_\idle(\Delta^\op)$ and $\coCart_\inert(\Delta^\op)$ do admit right adjoints. More generally, one can show the following.

\begin{proposition}[Partially cofree cocartesian fibrations]\label{proposition:Partially-cofree-fibrations}
	Let $\cB$ be a category and $\cB_0$ a wide subcategory. Then the inclusion
	\[\Fun(\cB,\Cat) \simeq \coCart(\cB) \hookrightarrow \coCart_{0}(\cB)\]
	admits a right adjoint $R_0$, which is explicitly given by
	\[R_0(X) \colon \cB \to \Cat; \quad R_0(X)(b) = \Fun_{/\cB}^{\cB_0}(\cB_{b/},X).\]
	Moreover, there are natural equivalences
	\begin{equation}\label{equation:cofree-cocartesian-fibration-2-categorical}
		\Fun(Y, R_0(X)) \simeq \Fun_{/\cB}^{\cB_0}(\un Y, X).
	\end{equation}
\end{proposition}

\begin{proof}
	We first show that $\coCart(\cB) \hookrightarrow \coCart_0(\cB)$ admits a right adjoint when viewed as a functor between categories (as opposed to 2-categories). Since $\coCart(\cB)$ is presentable, by \cite[Corollary 5.5.2.9]{HTT} it suffices to show that $\coCart(\cB) \hookrightarrow \coCart_0(\cB)$ preserves colimits. Let a diagram $f' \colon I \to \coCart(\cB)$ be given and write $f$ for the composite $I \to \coCart(\cB) \to \Cat$, where the second functor sends a cocartesian fibration to its source. It follows as in \cite[Proposition A.1]{Ramzi2022MonoidalGrothendieckConstruction} that one can compute the colimit of $f$ as $\un f[W^{-1}]$, and that this is also the colimit of $f'$ in $\coCart(\cB)$ and $\Cat_{/\cB}$. Here $W$ is the collection of cocartesian edges in $\un f$. Now suppose we are given a  cocone $f' \Rightarrow \cD$ in $\coCart_0(\cB)$. The universal property of the colimit in $\Cat_{/\cB}$ provides us with a functor $P \colon \un f[W^{-1}] \to \cD$ over $\cB$. To see that $\un f[W^{-1}]$ is the colimit of $f'$ in $\coCart_0(\cB)$, it suffices to show that $P$ preserves cocartesian lifts of maps in $\cB_0$. This follows exactly as at the end of the proof of \cite[Proposition A.1]{Ramzi2022MonoidalGrothendieckConstruction}.
	
	To obtain the equivalence of functor categories \cref{equation:cofree-cocartesian-fibration-2-categorical}, it suffices to show that $\coCart(\cB) \hookrightarrow \coCart_0(\cB)$ preserves tensors by objects of $\Cat$. This is clear since in both cases tensors are computed as cartesian products.
	
	Finally, the explicit description of $R_0(X)$ as a functor $\cB \to \Cat$ follows from the natural equivalences
	\[R_0(X)(b) \simeq \Fun_{/\cB}^\mathrm{cocart}(\cB_{b/},\un R_0(X)) \simeq \Fun_{/\cB}^{\cB_0}(\cB_{b/},X). \qedhere\]
\end{proof}

\begin{definition}
	The right adjoints of $\Fun(\Delta^\op, \Cat) \hookrightarrow \coCart_\idle(\Delta^\op)$ and $\Fun(\Delta^\op, \Cat) \hookrightarrow \coCart_\inert(\Delta^\op)$ will be denoted by $R_\idle$ and $R_\inert$, respectively.
\end{definition}

\section{The lax Morita double category}

As a first step towards constructing the Morita double category of a double category $\DD$, we will construct a slightly larger object.

\begin{definition}[Lax Morita double category]\label{definition:lax-morita-category}
	Let $\DD$ be a double category. Its \emph{lax Morita double category} $\LaxMor(\DD)$ is the functor
	\[\LaxMor(\DD)_\bullet \colon \Delta^\op \to \Cat; \quad \LaxMor(\DD)_n = \Lax([n]^\hor,\DD).\]
\end{definition}

\begin{warning}
	Unlike what the name suggests, the lax Morita double category $\LaxMor(\DD)$ generally does \emph{not} satisfy the Segal condition and is hence not a double category.
\end{warning}

Note that $\alln \to \Delta^\op$ is the cocartesian unstraightening of $[n]^\hor$. In particular, $\LaxMor(\DD)$ is equivalent to $R_\inert(\un \DD)$ and we obtain the following.

\begin{proposition}[{Universal property of $\LaxMor(\DD)$}]\label{proposition:Universal-property-LaxMor}
	Let $\DD$ and $\DE$ be double categories. Then there is an equivalence
	\[\Lax(\DE,\DD) \simeq \Fun(\DE,\LaxMor(\DD))\]
	that is natural in $\DD$ and $\DE$.
\end{proposition}

\begin{proof}
	This follows directly from \cref{proposition:Partially-cofree-fibrations}.
\end{proof}

Since $\LaxMor(\DD)_0$ is the category $\Lax([0]^\hor,\DD) = \Fun_{/\Delta^\op}^\inert(\Delta^\op,\DD)$, an object of $\LaxMor(\DD)_0$ consists of an object $d$ of $\DD$ together with an algebra in $\DD(d,d)$ with respect to the horizontal composition monoidal structure. Similarly, an object of $\LaxMor(\DD)_1$ describes algebras $A$ in $\DD(d,d)$ and $B$ in $\DD(e,e)$ together with an $(A,B)$-bimodule $M$ in $\DD(d,e)$. The idea of the Morita double category $\Mor(\DD)$ of $\DD$ is to define horizontal composition by the relative tensor product of such bimodules.
Concretely, we will define $\Mor(\DD)$ as a certain subobject of $\LaxMor(\DD)$ such that the objects in $\Mor(\DD)_2$ are ``witnesses'' for the fact that composition is given by the relative tensor product.
To make this precise, we will need the following assumptions on $\DD$.

\begin{definition}[Moritable double category]\label{def:Moritable}
	Let $\DD$ be a double category. We call $\DD$ \emph{Moritable} if 
	\begin{enumerate}[(a)]
		\item\label{item1:Moritable} for any two objects $x, y \in \DD_0$, the category $\DD(x,y)$ admits geometric realizations,
		\item\label{item2:Moritable} for any three objects $x,y,z \in \DD_0$, the composition functor
		\[\DD(x,y) \times \DD(y,z) \to \DD(x,z)\]
		preserves geometric realizations in each variable separately, and
		\item\label{item:Framed-double-category} the source-target projection $\DD_1 \to \DD_0 \times \DD_0$ is a bicartesian fibration.
	\end{enumerate}
\end{definition}

\begin{remark}
	Items \ref{item1:Moritable} and \ref{item2:Moritable} should not be surprising: these ensure that the relative tensor product exists and that it is associative, which is certainly necessary to construct a Morita (double) category. Item \ref{item:Framed-double-category} might seem technical and be a bit more surprising.
	This property is used in the proof of \cref{lem:VLan-existence}, because it allows one to reduce the computation of certain relative left Kan extensions to computations of certain colimits in mapping categories.
\end{remark}

\begin{remark}[Framed double category]\label{remark:Framed-double-category}
	A double category satisfying item \ref{item:Framed-double-category} of \cref{def:Moritable} is usually called a \emph{framed double category}. Using the Segal condition and induction, this condition implies that $\DD_n \to \DD_0^{\times n+1}$ is a bicartesian fibration for every $n \geq 1$.
\end{remark}

\begin{example}
	Let $\cC$ be a monoidal category, viewed as a double category via \cref{example:Moniodal-as-double}. Then item \ref{item:Framed-double-category} of \cref{def:Moritable} is automatic, hence $\cC$ is Moritable precisely if its underlying category admits geometric realizations and these are preserved by the monoidal structure in each variable separately.
\end{example}

\section{Vertical Kan extensions}\label{sec:vertical_kan_extensions}

To construct the Morita double category in \cref{sec:constructing_the_morita_double_category} and show that it satisfies the universal property of \cref{theoremB:Morita}, we will use a theory of ``vertical Kan extensions''.
This is an extension of the theory of operadic left Kan extensions from \cite[\S 3.1.2]{HA} to double categories.
We will only require a fraction of this theory, namely when all colimits involved are sifted.
Since this fraction is simpler than the general theory of vertical left Kan extensions, we will only focus on this case.
Recall from \cref{sec:Envelopes} that we write $\Env$ for the left adjoint to the inclusion
\[\Fun(\Delta^\op,\Cat) \simeq \coCart(\Delta^\op) \hookrightarrow \coCart_\inert(\Delta^\op).\]

\begin{definition}[Vertical left Kan extensions]\label{def:VertKan}
	Let $f \colon \cC \to \cD$ be a functor in $\coCart_{\inert}(\Delta^\op)$ and let $\DD$ be a double category.
	We say that $\DD$ \emph{admits vertical left Kan extensions along $f$} if the restriction functor
	\[\Fun_{/\Delta^\op}^\mathrm{inert}(\cD,\DD) \simeq \Fun(\Env \cD,\DD) \xrightarrow{f^*} \Fun(\Env \cC,\DD) \simeq \Fun_{/\Delta^\op}^\mathrm{inert}(\cC,\DD)\]
	admits a left adjoint $\VLan_f$.
	In this situation, given $g$ in $\Lax(\cC, \DD)$, we call $\VLan_f(g)$ the \emph{vertical left Kan extension} of $g$ along $f$.
\end{definition}

Our main result in this section is an existence result for such vertical left Kan extension along maps of \emph{generalized planar $\infty$-operads}.
These are defined in \cite[Definition 3.1.13]{GepnerHaugseng2015EnrichedCategoriesNonsymmetric} as a non-symmetric variant of Lurie's generalized $\infty$-operads \cite[\S 2.3.2]{HA}.
Instead of recalling their usual definition, we simply observe that by \cite[Observation 4.1.3 \& Example 4.1.9]{BarkanHaugsengea2022EnvelopesAlgebraicPatterns}, an object $\cP \to \Delta^\op$ of $\coCart_\inert(\Delta^\op)$ is a generalized planar $\infty$-operad if and only if $\Env(\cP) \colon \Delta^\op \to \Cat$ is a double category.
Given a generalized planar $\infty$-operad $\cP \to \Delta^\op$, we write $\cP_n$ for its fiber over $[n]$.

We require one more definition.

\begin{definition}[Primitive horizontal arrows]\label{def:primite-arrow}
	Let $\cP$ be a generalized planar $\infty$-operad. We call a horizontal arrow in $\Env \cP$ \emph{primitive} if it lies in the image of the inclusion $\cP_1 \hookrightarrow (\Env \cP)_1 \simeq \cP_{\activ}$.
\end{definition}

Observe that any horizontal arrow in $\Env \cP$ can be written as a composite of primitive horizontal arrows.

The main result of this section is the following existence criterion for vertical left Kan extension. Its proof is quite long and technical, and the reader should feel free to treat it as a black box.

\begin{lemma}\label{lem:VLan-existence}
	Let $\DD$ be a Moritable double category and
	\[\begin{tikzcd}[column sep=tiny]
		\cP && \cQ \\
		& {\Delta^\op}
		\arrow["f", from=1-1, to=1-3]
		\arrow[from=1-1, to=2-2]
		\arrow[from=1-3, to=2-2]
	\end{tikzcd}\]
	a map of generalized planar $\infty$-operads.
	Suppose that
	\begin{enumerate}[(1)]
		\item\label{VLan:cond1} $f \colon \cP \to \cQ$ is fully faithful,
		\item\label{VLan:cond2} $\cP_0$ and $\cQ_0$ are discrete sets and $f_0 \colon \cP_0 \to \cQ_0$ is a bijection, and
		\item\label{VLan:cond3} for any $x,y$ in $\cQ_0$ and any primitive $H$ in $(\Env \cQ)(x,y)$, the slice $(\Env \cP)(x,y)_{/H}$ admits a right cofinal\footnote{What we call \emph{right cofinal} is called \emph{cofinal} in \cite{HTT}. The terminology used here agrees with \cite{kerodon}.} functor from $\Delta^\op$.
	\end{enumerate}
	Then $\DD$ admits vertical left Kan extensions along $f$. Moreover, the functor
	\[\VLan_f \colon \Fun(\Env \cP,\DD) \to \Fun(\Env \cQ, \DD)\]
	is fully faithful, and its essential image is spanned by those double functors $G \colon \Env \cQ \to \DD$ with the property that for any $x,y$ in $\cQ_0$, the functor $G_{x,y} \colon (\Env \cQ)(x,y) \to \DD(G(x),G(y))$ is left Kan extended along $f_{x,y} \colon (\Env \cP)(x,y) \hookrightarrow (\Env \cQ)(x,y)$.
\end{lemma}

\begin{proof}
	Since $f \colon \cP \to \cQ$ is fully faithful, this also holds for $\un \Env \cP \to \un \Env \cQ$. We will show the following:

    \begin{claim}
	For any square of the form
	\begin{equation}\label{eq:Relative-LKan-square}
		\begin{tikzcd}
			\un \Env \cP \ar[r,"F"] \ar[d,"f"',hook] & \un \DD \ar[d,"p"]\\
			\un \Env \cQ \ar[r," "] \ar[ur,dashed,"\Lan_f F"' {xshift=-8,yshift=-2}] & \Delta^\op
		\end{tikzcd}
	\end{equation}
	such that $F$ preserves cocartesian morphisms, there exists a $p$-left Kan extension $\Lan_f F$ in the sense of \cite[Definition 4.3.2.2]{HTT}, which moreover preserves cocartesian morphisms.
	\end{claim}

	Combined with \cite[Proposition 4.3.2.17]{HTT}\footnote{Strictly speaking, the assumptions of this proposition might not be satisfied, since the claim does not guarantee that every functor $\un \Env \cP \to \un \DD$ over $\Delta^\op$ admits a $p$-left Kan extension. However, the proof of \cite[Proposition 4.3.2.17]{HTT} still goes through and shows that the left adjoint $\VLan_f$ exists.}, this claim guarantees the existence of the desired fully faithful left adjoint
	\[\VLan_f \colon \Lax(\cP,\DD) \simeq \Fun_{/\Delta^\op}^\mathrm{cocart}(\un \Env \cP, \un \DD) \hookrightarrow \Fun_{/\Delta^\op}^\mathrm{cocart}(\un \Env \cQ, \un \DD) \simeq \Lax(\cQ,\DD).\]

	We will now prove the claim, and afterwards characterize the essential image of this left adjoint.

	\textbf{Step 1. Existence of $\Lan_f F$.} To see that $\Lan_f F$ exists, it suffices by \cite[Lemma 4.3.2.13]{HTT} to show that the composite
	\[(\un \Env \cP)_{/\vec{x}} \to \un \Env \cP \xrightarrow{F} \un \DD\]
	admits a $p$-colimit for any $m \geq 0$ and any object $\vec{x} = (x_1,\ldots,x_m)$ in $(\un \Env \cQ)_m$. Since $\un \Env \cP \to \un \Env \cQ$ preserves all cocartesian morphisms over $\Delta^\op$, the inclusion $((\Env \cP)_m)_{/\vec{x}} \hookrightarrow (\un \Env \cP)_{/\vec{x}}$ is right cofinal.
	By \cite[Proposition 4.3.1.8]{HTT}, it suffices to show that the restriction
	\[F' \colon ((\Env \cP)_m)_{/\vec{x}} \to \un \DD\]
	admits a $p$-colimit. Since this diagram lands in the fiber $\DD_m$, we will first show that it admits a colimit there. For $m=0$ this is clear, so suppose $m > 0$. Write $i_{j-1}$ and $i_j$ for the source and target of $x_j$ and write $d_j$ for $F(i_j)$. Consider the commutative square
	\[\begin{tikzcd}[column sep=60]
		((\Env \cP)_m)_{/\vec x} \ar[r,"F'"] \ar[d,""] & \DD_m \ar[d,"q"]\\
		((\Env \cP)_m)_{/\vec x}^\triangleright \ar[r,"\const_{(d_0,\ldots,d_m)}"] & \DD_0^{\times m+1},
	\end{tikzcd}\]
	where the bottom horizontal functor is the constant functor equal to $(d_0,\ldots, d_m)$. The functor $q$ is bicartesian by \cref{remark:Framed-double-category}. By Propositions 4.3.1.5(2) and 4.3.1.10 of \cite{HTT}, it follows that the $q$-colimit of $F'$ can be computed as an ordinary colimit in the fiber of $q$; that is, it is computed as the colimit of
	\begin{equation}\label{eq:colim-mappingcats}
		((\Env \cP)_m)_{/\vec{x}} \to \DD(d_0,d_1) \times \cdots \times \DD(d_{m-1},d_m).
	\end{equation}
	Now observe that $((\Env \cP)_m)_{/\vec{x}} \simeq (\Env \cP)(i_0,i_1)_{/x_1} \times \cdots \times (\Env \cP)(i_{m-1},i_m)_{/x_m}$ since $\cP_0$ is discrete.
	It follows from \cref{lem:primi1} that the colimit of \cref{eq:colim-mappingcats} can be computed as a colimit of the form
	\begin{equation}\label{eq:colim-sifted-version-mappingcats}
		\Delta^\op \times \cdots \times \Delta^\op \to \DD(d_0,d_1) \times \cdots \times \DD(d_{m-1},d_m).
	\end{equation}
	Since $\Delta^\op$ is sifted, this colimit is componentwise computed as a geometric realization, which exists since $\DD$ is Moritable.
	To see that this defines a $p$-colimit for the diagram \cref{eq:Relative-LKan-square}, we need to show by \cite[Proposition 4.3.1.10]{HTT} that for any $\tau \colon [k] \to [m]$, the functor $\tau^* \colon \DD_m \to \DD_k$ preserves this colimit.
	Using that $\DD_k \to \DD_0^{k+1}$ is a bicartesian fibration, this reduces to showing that 
	\[\DD(d_0,d_1) \times \cdots \times \DD(d_{m-1},d_m) \to \DD(d_{\tau(0)},d_{\tau(1)}) \times \cdots \times \DD(d_{\tau(k-1)}, d_{\tau(k)})\]
	preserves the colimit of \cref{eq:colim-sifted-version-mappingcats}. If $\tau$ is idle this is clear, so it suffices to show this when $\tau$ is an inner face map.
	This case follows since the composition maps $\DD(c,d) \times \DD(d,e) \to \DD(c,e)$ preserve geometric realizations.
	We therefore conclude that the desired $p$-colimit exists and hence by \cite[Lemma 4.3.2.13]{HTT} that the relative left Kan extension \cref{eq:Relative-LKan-square} exists.
	
	\textbf{Step 2. $\Lan_f F$ preserves cocartesian morphisms.}
	Given the computations above, showing that $\Lan_f F$ preserves cocartesian morphisms over $\Delta^\op$ amounts to showing that for any $\tau \colon [n] \to [m]$ in $\Delta^\op$ and any $\vec{x}$ in $(\Env \cP)_m$, the square
	\[\begin{tikzcd}
		{((\Env \cP)_m)_{/\vec{x}}} & {\DD_m} \\
		{((\Env \cP)_n)_{/\tau^! \vec{x}}} & {\DD_n}
		\arrow[from=1-1, to=1-2]
		\arrow["{{\tau}^!}"', from=1-1, to=2-1]
		\arrow["{\tau^*}", from=1-2, to=2-2]
		\arrow[from=2-1, to=2-2]
	\end{tikzcd}\]
	induces an equivalence
	\[\colim_{y \in (\Env \cP)_{/\vec{x}}} \tau^* F(y) \to \colim_{y \in (\Env \cP)_{/\tau^! \vec{x}}} F(y).\]
	Here $\tau^!$ denotes the cocartesian transport functor along $\tau$.
	It therefore suffices to show that $\tau^!$ is right cofinal. If $\tau$ is an outer face map, this follows since $\Delta^\op$ is weakly contractible. If $\tau$ is an inner face map, then this follows since for any $H$ in $(\Env \cQ)(x,y)$ and $H'$ in $(\Env \cQ)(y,z)$, the composition map
	\[\otimes \colon (\Env \cP)(x,y)_{/H} \times (\Env \cP)(y,z)_{/H'} \to (\Env \cP)(x,z)_{/H \otimes H'}\]
	is an equivalence of categories by \cref{lem:tensordisjunctive}.
	Finally, when $\tau$ is a degeneracy map, this follows since the inclusion $\{ \unit_x \} \to (\Env \cP)(x,x)_{/\unit_x}$ is the inclusion of a terminal object.
	
	This concludes the proof of the claim, hence the fully faithful left adjoint $\VLan_f$ exists.

	\textbf{Step 3. The essential image of $\VLan_f$.}
	Finally, we characterize the essential image of $\VLan_f \colon \Fun(\Env \cP, \DD) \hookrightarrow \Fun(\Env \cQ, \DD)$. 
	Observe that the computation of the relative right Kan extension shows that for any $F \colon \Env \cP \to \DD$, the double functor $\VLan_f F \colon \Env \cQ \to \DD$ is of the desired form.
	Conversely, let $G \colon \Env \cQ \to \DD$ be given with the property that $G_{x,y} \colon (\Env \cQ)(x,y) \to \DD(G(x),G(y))$ is left Kan extended from $(\Env \cP)(x,y)$, and consider the counit $\varepsilon: \VLan_f(G|_{\Env \cP}) \Rightarrow G$.
	Note that $\varepsilon_1 \colon \VLan_f(G|_{\Env \cP})_1 \Rightarrow G_1$ is by construction an equivalence when restricted to $\Env \cP$.
	Since $G_{x,y} \colon (\Env \cQ)(x,y) \to \DD(G(x),G(y))$ is left Kan extended from $(\Env \cP)(x,y)$, it follows that $\varepsilon_1$ is an equivalence.
	We conclude from \cref{lem:double-functors-equivalence} that $\varepsilon \colon \VLan_f(G|_{\Env \cP})\Rightarrow G$ is an equivalence and hence that $G$ lies in the essential image of $\VLan_f$.
\end{proof}

We used the following lemmas.

\begin{lemma}\label{lem:tensordisjunctive}
	Let $\cP$ be a generalized planar $\infty$-operad such that $\cP_0$ is a set, and let $x \xrightarrow{H} y \xrightarrow{H'} z$ be a pair of composable horizontal arrows in $\Env \cP$.
	Then the composition map
	\[\otimes \colon (\Env \cP)(x,y)_{/H} \times (\Env \cP)(y,z)_{/H'} \to (\Env \cP)(x,z)_{/H \otimes H'}\]
	is an equivalence.
\end{lemma}

\begin{proof}
	Suppose $H$ and $H'$ lie in the fibers of $\cP \to \Delta^\op$ over $[m]$ and $[n]$, respectively.
	When slicing the horizontal composition
	\[(\Env \cP)_2 \simeq \cP \times_{\Delta^\op} (\Delta_{[2]/}^\activ)^\op \xrightarrow{d_1} \cP \times_{\Delta^\op} (\Delta_{[1]/}^\activ)^\op \simeq (\Env \cP)_1\]
	over $H \otimes H'$, one obtains the identity
	$\cP^\activ_{/H \otimes H'} \to \cP^\activ_{/H \otimes H'}$.
	Since $\Env \cP$ satisfies the Segal condition and $\cP_0$ is discrete, this is map is equivalent to the composition map
	\[\otimes \colon (\Env \cP)(x,y)_{/H} \times (\Env \cP)(y,z)_{/H'} \to (\Env \cP)(x,z)_{/H \otimes H'}. \qedhere\]
\end{proof}

\begin{lemma}\label{lem:primi1}
	Let $f \colon \cP \to \cQ$ be a fully faithful map of generalized planar $\infty$-operads such that $f_0 \colon \cP_0 \to \cQ_0$ is a bijection of sets, and suppose that for any primitive horizontal arrow $H \colon x \to y$ in $\Env \cQ$, the slice $(\Env \cP)(x,y)_{/H}$ admits a right cofinal functor from $\Delta^\op$.
	Then this holds for every horizontal arrow $H$ in $\Env \cQ$.
\end{lemma}

\begin{proof}
	Given a horizontal arrow $H$ in $\Env \cQ$, we may write it as a horizontal composite $H_1 \otimes \cdots \otimes H_n$ of primitive horizontal arrows.
	By \cref{lem:tensordisjunctive}, we obtain an equivalence
	\[(\Env \cP)(x_0,x_n)_{/H} \simeq (\Env \cP)(x_0,x_1)_{/H_1} \times \cdots \times (\Env \cP)(x_{n-1},x_n)_{/H_n}.\]
	Since each factor receives a right cofinal functor from $\Delta^\op$ and $\Delta^\op$ is sifted, it follows that $(\Env \cP)(x_0,x_n)_{/H}$ receives a right cofinal functor from $\Delta^\op$.
\end{proof}

The essential image of $\VLan_f$ in \cref{lem:VLan-existence} admits a slightly simpler description which is worth mentioning.

\begin{lemma}\label{lem:primi2}
	Let $\DD$ and $f \colon \cP \to \cQ$ be as in \cref{lem:VLan-existence}. Then $G \colon \Env \cQ \to \DD$ lies in the image of $\VLan_f$ if and only if for any primitive horizontal arrow $H \colon x \to y$ in $\Env \cQ$, the diagram
	\[((\Env \cP)(x,y)_{/H})^\triangleright \to (\Env \cQ)(x,y) \xrightarrow{G_{x,y}} \DD(G(x),G(y))\]
	is a colimit diagram.
\end{lemma}

\begin{proof}
	The ``only if'' direction is clear, so let $H$ be a horizontal arrow in $\Env \cQ$. As in \cref{lem:primi1}, we obtain an equivalence
	\[(\Env \cP)(x_0,x_n)_{/H} \simeq (\Env \cP)(x_0,x_1)_{/H_1} \times \cdots \times (\Env \cP)(x_{n-1},x_n)_{/H_n}\]
	where each $H_i$ is primitive.
	Using that every factor admits a right cofinal map from $\Delta^\op$, that $\Delta^\op$ is sifted, and that horizontal composition in $\DD$ preserves geometric realizations, we see that
	\[((\Env \cP)(x_0,x_n)_{/H})^\triangleright \to (\Env \cQ)(x_0,x_n) \xrightarrow{G} \DD(G(x_0),G(x_n))\]
	is a colimit diagram if for every $1 \leq i \leq n$,
	\[((\Env \cP)(x_{i-1},x_i)_{/H_i})^\triangleright \to (\Env \cQ)(x_{i-1},x_i) \xrightarrow{G} \DD(G(x_{i-1}),G(x_i))\]
	is a colimit diagram.
	This holds by assumption.
\end{proof}

\section{Constructing the Morita double category}\label{sec:constructing_the_morita_double_category}

An object $\sigma$ of $\LaxMor(\DD)_n = \Lax([n]^\hor,\DD)$ can be thought of as a collection of algebras $A_0,\ldots,A_n$ in $\DD$ together with $(A_i,A_j)$-bimodules $M_{ij}$ for all $i < j$ and suitably coherent bilinear maps $M_{ij} \otimes M_{jk} \to M_{ik}$ for all $i < j < k$.
This allows us to define relative tensor products.

\begin{construction}\label{const:Qijk}
For every $0 \leq i < j < k \leq n$, we define a functor $Q_{ijk} \colon (\Delta^\op)^\triangleright \to (\Env [n]^\hor)_1 = (\alln)_\activ$ by sending a linearly ordered set $[m]$ to the map
\[Q_{ijk}([m]) \colon [m+2] \to [n];\quad t \mapsto \begin{cases}
	i \quad &\text{if} \quad t = 0 \\
	j \quad &\text{if} \quad 0 < t < m+2 \\
	k \quad &\text{if} \quad t = m+2.
\end{cases} \]
Here $[-1]$ should be interpreted as the cone point. If we view $\sigma \in \LaxMor(\DD)_n$ as a functor $\Env [n]^\hor \to \DD$, then precomposition with $Q_{ijk}$ yields the diagram
\[\sigma \circ Q_{ijk} \colon (\Delta^\op)^\triangleright \to \DD(\sigma(i),\sigma(k)); \quad [m] \mapsto \begin{cases}
	M_{ij} \otimes A_j^{\otimes m} \otimes M_{jk} \quad &\text{if} \quad m \geq 0\\
	M_{ik} \quad &\text{if} \quad m = -1.
\end{cases}\]
\end{construction}

If this is a colimit diagram, then we say that the maps $M_{ij} \otimes A_j^{\otimes \bullet} \otimes M_{jk} \to M_{ik}$ \emph{exhibit $M_{ik}$ as the relative tensor product} $M_{ij} \otimes_{A_j} M_{jk}$.

\begin{definition}[Composite objects]\label{definition:composite-objects}
	An object $\sigma$ of $\LaxMor(\DD)_n$ as above is called \emph{composite} if for every $0 \leq i < j < k \leq n$, the maps $M_{ij} \otimes A_j^{\otimes \bullet} \otimes M_{jk} \to M_{ik}$ exhibit $M_{ik}$ as the relative tensor product $M_{ij} \otimes_{A_j} M_{jk}$.
\end{definition}

\begin{remark}\label{remark:everything-composite-degree-01}
	This condition is vacuous if $n=0,1$.
\end{remark}

One can deduce directly from the definition of composite objects that they form a subobject of $\LaxMor(\DD)$.

\begin{proposition}\label{prop:composite-objects-preserved-by-structure-maps}
	Let $\DD$ be a Moritable double category. Then for any map $\tau \colon [m] \to [n]$, the map $\tau^* \colon \LaxMor(\DD)_n \to \LaxMor(\DD)_m$ preserves composite objects.
\end{proposition}

\begin{proof}
	We first show this when $\tau$ is a face map $\delta_l \colon [n-1] \to [n]$.
	Let $\alpha \colon \Env [n]^\hor \to \DD$ be a composite object in $\LaxMor(\DD)_n$
	and let $Q_{ijk} \colon (\Delta^\op)^\triangleright \to (\Env [n-1]^\hor)_1$ be as in \cref{const:Qijk}.
	Then $\tau \circ Q_{ijk} \colon (\Delta^\op)^\triangleright \to (\Env [n]^\hor)_1$ is again a functor of the form $Q_{ijk}$ (potentially for different $i < j < k$), hence $\tau^*\alpha = \alpha \circ \tau \colon \Env [n-1]^\hor \to \DD$ is composite.

	Now suppose $\tau$ is a degeneracy map $\sigma_i \colon [n+1] \to [n]$.
	Using a similar argument as above, to see that $\sigma^* \alpha \colon \Env [n+1]^\hor \to \DD$ is composite, it suffices to show that for any $j < i$ and any $k > i$, the maps
	\[M_{j,i} \otimes_{A_i} A_i \to M_{j,i} \quad \text{and} \quad A_i \otimes_{A_i} M_{i,k} \to M_{i,k}\]
	are equivalences.
	This follows by the usual extra degeneracy argument (cf.\ Example 4.7.2.7 of \cite{HA}).
\end{proof}

\begin{definition}[Morita double category]
	Let $\DD$ be a Moritable double category. Its \emph{Morita double category} $\Mor(\DD)$ is defined by letting $\Mor(\DD)_n$ be the full subcategory of $\LaxMor(\DD)_n$ spanned by the composite objects.
\end{definition}

Observe that by \cref{prop:composite-objects-preserved-by-structure-maps}, $\Mor(\DD)$ is indeed a well-defined object in $\Fun(\Delta^\op,\Cat)$.
In what follows, we will use the following variant of \cref{const:Qijk}.

\begin{construction}\label{const:barQij}
	Let $0 \leq i < j \leq n$.
	We will write $\star$ for the concatenation of two linearly ordered sets.
	We define the concatenation functor
	\[\overline{Q}_{ij} \colon (\Delta^{\times j-i-1,\op})^\triangleright \to \Delta^\op\]
	by
	\[\overline{Q}_{ij}(I_{i+1},\ldots,I_{j-1}) = [0] \star I_{i+1} \star \cdots \star I_{j-1} \star [0],\]
	where we send the cone point to $[0] \star [0]$.
	Note that $\overline{Q}_{ij}(I_{i+1},\ldots,I_{j-1})$ comes with a canonical map to $[n]$, given by
	\[\overline{Q}_{ij}(I_{i+1},\ldots,I_{j-1}) \to [n]; \quad t \mapsto
	\begin{cases}
		i \quad &\text{if} \quad t \text{ is minimal} \\
		k \quad &\text{if} \quad t \text{ lies in } I_k \\
		j \quad &\text{if} \quad t \text{ is maximal}.
	\end{cases} \]
	We can therefore see $\overline{Q}_{ij}$ as a functor landing in $(\alln)_{\activ} = (\Env [n]^\hor)_1$.
	Observe that it even lands in the subcategory $(\Env [n]^\hor)(i,j)$ of $(\Env [n]^\hor)_1$
\end{construction}

\begin{lemma}\label{lem:composites-charQij}
	Let $\DD$ be a Moritable double category and $\sigma \colon \Env [n]^\hor \to \DD$ an object of $\LaxMor(\DD)$. Then $\sigma$ is composite if and only if for any $0 \leq i, j \leq n$, the diagram
	\[(\Delta^{\times j-i-1,\op})^\triangleright \xrightarrow{\overline{Q}_{ij}} (\Env [n]^\hor)(i,j) \xrightarrow{\sigma} \DD(\sigma(i),\sigma(j))\]
	is a colimit diagram.
\end{lemma}

\begin{proof}
	This is essentially the statement that the relative tensor products $M_{ij} \otimes_{A_j} M_{jk}$ are associative, which follows from the fact that $\DD$ is Moritable. We leave the details to the reader.
\end{proof}

This allows us to characterize composite objects in terms of idle slice categories, which will be important for showing that $\Mor(\DD)$ satisfies the Segal condition.

\begin{definition}[The idle slice category]
	The category $\idln$ is defined as the full subcategory of $\alln$ spanned by the idle maps.
\end{definition}

By \cite[Lemma 4.14]{Haugseng2017HigherMoritaCategory}, the source projection $\idln \to \Delta^\op$ is a generalized planar $\infty$-operad and hence $\Env \idln$ is a double subcategory of $\Env \alln = \Env [n]^\hor$.

\begin{lemma}\label{lem:composites-idle-char}
	Let $\DD$ be a Moritable double category. Then the restriction functor
	\[\LaxMor(\DD)_n \simeq \Fun_{/\Delta^\op}^\inert(\alln, \DD) \to \Fun_{/\Delta^\op}^\inert(\idln,\DD)\]
	admits a fully faithful left adjoint whose essential image consists precisely of the composite objects.
\end{lemma}

\begin{proof}
	We will construct this fully faithful left adjoint by vertical left Kan extension along the inclusion $i \colon \idln \hookrightarrow \alln$.
	Conditions \ref{VLan:cond1}--\ref{VLan:cond2} of \cref{lem:VLan-existence} are clearly satisfied.
	For condition \ref{VLan:cond3}, given $0 \leq i < j \leq n$, let $H_{ij}$ denote the unique horizontal morphism from $i$ to $j$ in $[n]^\hor$.
	Observe that the functor $\overline{Q}_{ij}$ from \cref{const:barQij} lifts uniquely to a functor
	\[(\Delta^{\times j-i-1,\op})^\triangleright \to (\Env [n]^\hor)(i,j)_{/H_{ij}},\]
	and that its restriction to $\Delta^{\times j-i-1,\op}$ defines a functor
	\[\Delta^{\times j-i-1,\op} \to (\Env \idln)(i,j)_{/H_{ij}}.\]
	We leave it to the reader to verify this functor is right cofinal, hence that \ref{VLan:cond3} is satisfied.
	This guarantees that the fully faithful left adjoint exists and is given by $\VLan_i$.

	To show that its essential image consists of the composite objects, note that by \cref{lem:primi2} and the above, $\sigma \colon \Env [n]^\hor \to \DD$ lies in the essential image of $\VLan_i$ precisely if
	\[(\Delta^{\times j-i-1,\op})^\triangleright \xrightarrow{\overline{Q}_{ij}} (\Env [n]^\hor)(i,j) \xrightarrow{\sigma} \DD(\sigma(i),\sigma(j))\]
	is a colimit diagram for any $0 \leq i < j \leq n$.
	By \cref{lem:composites-charQij}, this is the case precisely if $\sigma$ is composite.
\end{proof}

Combining this with the results of \cite[\S 4.3]{Haugseng2017HigherMoritaCategory}, it follows that $\Mor(\DD)$ is a double category.

\begin{proposition}\label{prop:MorD-is-a-double-category}
	Let $\DD$ be a Moritable double category. Then $\Mor(\DD)$ is a double category.
\end{proposition}

\begin{proof}
	Observe that the inclusions $\rho_{i,i+j} \colon [j] \to [n]$ induce maps $\all{j} = \idl{j} \hookrightarrow \idln$ for $j=0,1$. It is proved in \cite[\S 4.3]{Haugseng2017HigherMoritaCategory} that these exhibit $\idln$ as the colimit
	\[\all{1} \cup_{\all{0}} \cdots \cup_{\all{0}} \all{1}\]
	in the category of generalized planar $\infty$-operads. It follows from \cref{lem:composites-idle-char} that the Segal maps
	\[\Fun_{/\Delta^\op}^\inert(\idln,\DD) \simeq \Mor(\DD)_n \to \Mor(\DD)_1 \times_{\Mor(\DD)_0} \cdots \times_{\Mor(\DD)_0}\Mor(\DD)_1\]
	are equivalences.
\end{proof}

\begin{remark}\label{remark:Composites-left-adjoint-to-lax-Segal-map}
	This proof shows that the the Segal map
	\[\LaxMor(\DD)_n \to \LaxMor(\DD)_1 \times_{\LaxMor(\DD)_0} \cdots \times_{\LaxMor(\DD)_0}\LaxMor(\DD)_1\]
	admits a fully faithful left adjoint whose essential image consists of the composite objects.
\end{remark}

\begin{observation}\label{obs:characterizing-functors-into-morD}
	Let $\DD$ be a Moritable double category and $\DE$ a double category. Since $\Mor(\DD)$ is levelwise a full subcategory of $\LaxMor(\DD)$, it follows that $\Fun(\DE,\Mor(\DD))$ is a full subcategory of $\Fun(\DE,\LaxMor(\DD))$.
	It follows from the definition of the composite objects in $\LaxMor(\DD)_n$ that $F \colon \DE \to \LaxMor(\DD)$ lands in $\Mor(\DD)$ precisely if the corresponding functor $F' \colon \Env(\DE) \to \DD$ has the following property:
	\begin{itemize}
		\item Let $M \colon x \to y$ and $N \colon y \to z$ be horizontal morphisms in $\DE$ and $K \colon x \to z$ their composite in $\DE$. Then the augmented simplicial diagram
		\[
		\begin{tikzcd}
		    F(K) & F(M) \otimes F(N) \ar[l] \ar[r, shorten=0.4em] & F(M) \otimes F(\unit_y) \otimes F(N) \ar[l, shift left] \ar[l, shift right] \ar[r, shift left, shorten=0.4em] \ar[r, shift right, shorten=0.4em] & \cdots \ar[l, shift left=2] \ar[l] \ar[l, shift right=2]
		\end{tikzcd}
		\]
		is a colimit diagram in $\DD(x,z)$; that is, it exhibits $F(K)$ as the relative tensor product $F(M) \otimes_{F(\unit_y)} F(N)$.
	\end{itemize}
	This augmented simplicial diagram is constructed analogously to the functor $Q_{ijk}$ in \cref{const:Qijk}.
\end{observation}

\section{The proof of the universal property}

Now that we have constructed the double category $\Mor(\DD)$, it is time to prove its universal property. It follows from \cref{proposition:Partially-cofree-fibrations} that $\UnitLax(\DE,\Mor(\DD)) \simeq \Fun(\DE,R_\idle(\un\Mor(\DD)))$. In particular, if we can show that $R_\idle(\un\Mor(\DD)) \simeq \LaxMor(\DD)$, then the result follows from \cref{proposition:Universal-property-LaxMor}. By the explicit description of $R_\idle(\un\Mor(\DD))$ given in \cref{proposition:Partially-cofree-fibrations}, this holds precisely if we can establish an equivalence
\begin{equation}\label{required-natural-equivalence}
	\Fun([n]^\hor,\LaxMor(\DD)) \simeq \UnitLax([n]^\hor,\Mor(\DD))
\end{equation}
that is natural in $[n]$. Observe that by \cref{lem:composites-idle-char}, the inclusions $\Mor(\DD)_n \hookrightarrow \LaxMor(\DD)_n$ admit right adjoints. By the dual of \cite[Proposition 7.3.2.6]{HA} or an argument dual to \cite[Lemma 4.45]{Abellan2023ComparingLaxFunctors}, the inclusion $\cL \colon \un \Mor(\DD) \to \un \LaxMor(\DD)$ admits a relative right adjoint 
\begin{equation}\label{equation:Right-adjoint-Laxmor}
	\cR \colon \un \LaxMor(\DD) \to \un \Mor(\DD)
\end{equation}
that preserves cocartesian lifts of idle maps---in particular, $\cR$ is an object of $\UnitLax(\LaxMor(\DD),\Mor(\DD))$. The natural equivalence \cref{required-natural-equivalence} will be obtained by postcomposition with $\cR$.

Observe that the relative adjunction $\cL \dashv \cR$ induces an adjunction
\begin{equation}\label{equation:important-adjunction-appendix}
	\cL_* : \UnitLax([n]^\hor, \Mor(\DD)) \rightleftarrows \UnitLax([n]^\hor,\LaxMor(\DD)) : \cR_*
\end{equation}
by postcomposition and that $\cL_*$ is fully faithful since $\cL$ is. The proof of \cref{theoremB:Morita} will rely on an explicit characterization of the essential image of $\cL_*$. Note that the right-hand side of \cref{equation:important-adjunction-appendix} may be identified with $\Fun(\Env \UnitEnv [n]^\hor, \DD)$. Let us write $\eta \colon [n]^\hor \to \UnitEnv[n]^\hor$ and $\varepsilon \colon \UnitEnv [n]^\hor \to [n]^\hor$ for the unit and counit, respectively.

\begin{lemma}\label{the crucial lemma}
	Let $\DD$ be a Moritable double category. Then the functor
	\[\eta^* \colon \Fun(\Env \UnitEnv [n]^\hor, \DD) \simeq \Lax(\UnitEnv [n]^\hor, \DD) \to \Lax([n]^\hor, \DD)\]
	obtained by restricting along $\eta \colon [n]^\hor \to \UnitEnv [n]^\hor$ admits a fully faithful left adjoint $\eta_!$. Moreover, the essential image of $\eta_!$ agrees with the essential image of
	\[\cL_* \colon \UnitLax([n]^\hor, \Mor(\DD)) \hookrightarrow \UnitLax([n]^\hor,\LaxMor(\DD)) \simeq \Fun(\Env \UnitEnv [n]^\hor, \DD).\]
\end{lemma}

The proof of \cref{the crucial lemma} will be given in \cref{appendix:ssec:Proof-crucial-lemma} below. Let us first show how to deduce our main theorem from it.

\begin{theorem}[Universal property of $\Mor(\DD)$]\label{theorem:appendix:universal-property-Mor}
	Let $\DD$ be a Moritable double category. Then postcomposition with the functor $\cR$ from \cref{equation:Right-adjoint-Laxmor} induces an equivalence
	\[\Fun([n]^\hor,\LaxMor(\DD)) \simeq \UnitLax([n]^\hor,\Mor(\DD)).\]
	In particular, for any double category $\DE$ there is a natural equivalence
	\[\Lax(\DE,\DD) \simeq \UnitLax(\DE,\Mor(\DD)).\]
\end{theorem}

\begin{proof}
	Consider the diagram
	\begin{equation*}\label{equation:Diagram1-morita-theorem}
		\begin{tikzcd}
			\Fun([n]^\hor,\LaxMor(\DD)) & \UnitLax([n]^\hor,\Mor(\DD)). \\
			\UnitLax([n]^\hor,\LaxMor(\DD))
			\arrow[from=1-1, to=1-2]
			\arrow[from=1-1, to=2-1,hook,"i"]
			\arrow[""{name=0, anchor=center, inner sep=0},"\eL_*", bend left=15, from=1-2, to=2-1]
			\arrow[""{name=1, anchor=center, inner sep=0},"\eR_*", from=2-1, to=1-2]
			\arrow["\dashv"{anchor=center, rotate=108}, draw=none, from=0, to=1]
		\end{tikzcd}
	\end{equation*}
	We need to show that the top horizontal map is an equivalence. By \cref{proposition:Universal-property-LaxMor,the crucial lemma}, this diagram is equivalent to a diagram of the form
	\[\begin{tikzcd}
		\Lax([n]^\hor,\DD) & \Lax([n]^\hor,\DD). \\
		\Lax(\UnitEnv [n]^\hor,\DD)
		\arrow[from=1-1, to=1-2]
		\arrow[from=1-1, to=2-1,hook,"\varepsilon^*"']
		\arrow[""{name=0, anchor=center, inner sep=0},"\eta_!", bend left=15, from=1-2, to=2-1]
		\arrow[""{name=1, anchor=center, inner sep=0},"\eR_*'", from=2-1, to=1-2]
		\arrow["\dashv"{anchor=center, rotate=108}, draw=none, from=0, to=1]
	\end{tikzcd}\]
	Since $\eR'_*$ is right adjoint to $\eta_!$, we see that $\eR'_* \simeq \eta^*$. Since $\varepsilon \eta \simeq \id$ by the triangle equalities, we obtain an equivalence $\eta^* \varepsilon^* \simeq \id$. It follows that the map
	\[\Fun([n]^\hor,\LaxMor(\DD)) \xrightarrow{\eR_* \circ i} \UnitLax([n]^\hor,\Mor(\DD))\]
	is an equivalence.
	This shows that $\LaxMor(\DD) \simeq R_\idle(\un\Mor(\DD))$, so the result follows from \cref{proposition:Partially-cofree-fibrations}.
\end{proof}

\begin{remark}
	In fact, it follows from \cref{proposition:Partially-cofree-fibrations} that there is a natural equivalence
	\[\Lax(\DE,\DD) \simeq \Fun(\DE,\LaxMor(\DD)) \simeq \UnitLax(\DE,\Mor(\DD))\]
	for any functor $\DE \colon \Delta^\op \to \Cat$, not just when $\DE$ is a double category.
\end{remark}

\section{The proof of \texorpdfstring{\cref{the crucial lemma}}{the crucial lemma}}\label{appendix:ssec:Proof-crucial-lemma}

For the proof of \cref{the crucial lemma}, we need to compute vertical left Kan extensions along $\eta \colon [n]^\hor \to \UnitEnv [n]^\hor$. For this it will be useful to introduce some notation.

\begin{notation}
Given $0 \leq i < j \leq n$, we shall write $H_{ij}$ for the horizontal arrow in $[n]^\hor$ with source $i$ and target $j$, and $\unit_i$ for the identity at $i$.
We will denote horizontal composition in $\UnitEnv [n]^\hor$ with $\barotimes$.
\Cref{lemma:Unital-lax-envelope} shows that any non-identity horizontal morphism in $\UnitEnv [n]^\hor$ can be uniquely written as a composite of the form $H_{i_0 i_1} \barotimes \cdots \barotimes H_{i_{m-1}i_m}$ for some $0 \leq i_0 < \cdots < i_m \leq n$.
We will write $\boxtimes$ for the horizontal composites in $\Env [n]^\hor$ and $\Env \UnitEnv [n]^\hor$, and $A_i$ for the image of $\unit_i$ under the universal lax functors $[n]^\hor \to \Env [n]^\hor$ and $\UnitEnv [n]^\hor \to \Env \UnitEnv [n]^\hor$.
\end{notation}

\begin{proof}[Proof of \cref{the crucial lemma}]
	We will apply \cref{lem:VLan-existence} to construct the left adjoint $\VLan_\eta$ of
	\[\eta^* \colon \Lax(\UnitEnv [n]^\hor, \DD) \to \Lax([n]^\hor, \DD).\]
	It is clear that conditions \ref{VLan:cond1}--\ref{VLan:cond2} of \cref{lem:VLan-existence} are satisfied. For condition \ref{VLan:cond3}, let $H = H_{i_0 i_1} \barotimes \cdots \barotimes H_{i_{m-1}i_m}$ be a primitive horizontal arrow in $(\Env \UnitEnv [n]^\hor)(i_0,i_m)$.
	We leave it to the reader to verify that the functor
	\begin{align*}
	(\Delta^\op)^{\times m-1} &\to (\Env [n]^\hor)(i_0,i_m)_{/H} \\
	([k_1],\ldots,[k_{m-1}]) &\mapsto H_{i_0 i_1} \boxtimes A_{i_1}^{\boxtimes k_1} \boxtimes H_{i_1 i_2} \boxtimes A_{i_2}^{\boxtimes k_2} \boxtimes \cdots \boxtimes A_{i_{m-1}}^{\boxtimes k_{m-1}} \boxtimes H_{i_{m-1} i_m}
	\end{align*}
	is right adjoint and hence right cofinal.
	Since the diagonal $\Delta^\op \to (\Delta^\op)^{\times m-1}$ is right cofinal, condition \ref{VLan:cond3} is satisfied.

	By \cref{lem:primi2} and the cofinality of this functor, it follows that $G \colon \Env \UnitEnv [n]^\hor \to \DD$ lies in the image of $\VLan_\eta$ precisely if for any horizontal arrow $H = H_{i_0 i_1} \barotimes \cdots \barotimes H_{i_{m-1}i_m}$ in $\UnitEnv [n]^\hor$, the map
	\[G(H_{i_0i_1}) \otimes_{G(A_{i_1})} \cdots \otimes_{G(A_{i_{m-1}})} G(H_{i_{m-1}i_m}) \to G(H_{i_0 i_1} \barotimes \cdots \barotimes H_{i_{m-1}i_m})\]
	is an equivalence.
	It follows from the associativity of the relative tensor product and \cref{obs:characterizing-functors-into-morD} that this is the case precisely if the corresponding $G' \colon \UnitEnv [n]^\hor \to \LaxMor(\DD)$ lands in $\Mor(\DD)$.
	In particular, the essential image of $\VLan_\eta$ in $\Lax(\UnitEnv [n]^\hor, \DD) \simeq \Fun(\UnitEnv [n]^\hor, \LaxMor(\DD))$ agrees with that of $\eL_* \colon \Fun(\UnitEnv [n]^\hor, \Mor(\DD)) \to \Fun(\UnitEnv [n]^\hor, \LaxMor(\DD))$
\end{proof}

	\printbibliography
	
\end{document}